\documentclass[a4paper, 10pt, reqno]{amsart}
\usepackage{amsmath, amscd}
\usepackage{amssymb, color}
\usepackage[T1]{fontenc}
\usepackage{charter}
\usepackage[colorlinks]{hyperref}
\definecolor{linkblue}{RGB}{1,1,190}
\definecolor{citered}{RGB}{190,1,1}
\hypersetup{
	linkcolor=linkblue,
	urlcolor=linkblue,
	citecolor=citered
}
\usepackage{comment}

\makeatletter
\def\namedlabel#1#2{\begingroup
    #2%
    \def\@currentlabel{#2}%
    \phantomsection\label{#1}\endgroup
}
\makeatother

\def\doi#1{{\small\href{https://doi.org/#1}{\path{doi:#1}}}}
\def\arxiv#1{{\small\href{http://www.arxiv.org/abs/#1}{\path{arXiv:#1}}}}
\def\url#1{{\small\href{#1}{\path{#1}}}}

\theoremstyle{plain}
\newtheorem{theorem}{\bf Theorem}[section]
\newtheorem{proposition}[theorem]{\bf Proposition}
\newtheorem{lemma}[theorem]{\bf Lemma}
\newtheorem{corollary}[theorem]{\bf Corollary}

\theoremstyle{definition}

\newcommand{\N}{\mathbb N}

\newcommand{\bdot}{\boldsymbol{\cdot}}

 \DeclareMathOperator{\ord}{ord}
\DeclareMathOperator{\lcm}{lcm} 
 
\DeclareMathOperator{\spec}{spec} \DeclareMathOperator{\supp}{supp}

\numberwithin{equation}{section}

\newcommand{\pronor}{%
  \mathrel{\ooalign{$\lneq$\cr\raise.22ex\hbox{$\lhd$}\cr}}}

\begin{document}

\title{On algebraic and arithmetic properties of monoids of product-$K$ sequences}

\author{Jun Seok Oh}
\address{Department of Mathematics Education, Jeju National University, Jeju 63243, Republic of Korea}
\email{junseok.oh@jejunu.ac.kr}

\author{Doniyor Yazdonov}
\address{Department of Mathematics and Scientific Computing, University of Graz, NAWI Graz, 8010 Graz, Austria}
\email{doniyor.yazdonov@uni-graz.at}

\subjclass{11B30, 13F05, 13F15, 13F45, 20M13}
\keywords{product-$K$ sequences, C-monoids, Krull monoids, seminormal monoids, sets of lengths, catenary degree}

\thanks{J.S. Oh was supported by the National Research Foundation of Korea (NRF) grant funded by the Korea government (MSIT) (NRF-2021R1G1A1095553). D. Yazdonov was supported by FWF, Project Number DOC 183-N}

\begin{abstract}
Let $G$ be a group and $K$ be a normal subgroup of $G$.
A sequence over $G$ is a finite collection of terms from $G$, where repetition is allowed, and the order is disregarded.
A product-$K$ sequence is a sequence whose terms can be ordered such that their product in $G$ belongs to $K$.
The set $\mathcal B_K (G)$ of all product-$K$ sequences over $G$ forms a monoid, called the monoid of product-$K$ sequences, under the operation of sequence concatenation.
In this paper, we investigate the algebraic and arithmetic properties of the monoid $\mathcal B_K (G)$.
Among our main results, we provide precise characterizations of when $\mathcal B_K (G)$ satisfies key properties, namely being (transfer) Krull, seminormal, or (half-)factorial.
Our results generalize existing frameworks, making them applicable to both the classical abelian and the more recently developed non-abelian settings.
\end{abstract}
\maketitle

% % % % % % % % % % % % % % % % % % % % % % % % % % % % % % % % % % % % % % % % % % % % % % % % % % % % % %
% % %             % % % % % % %               % % % % % % %               % % % % % % %               % % %
% % %             % % % % % % %               % % % % % % %               % % % % % % %               % % %
% % % % % % % % % % % % % % % % % % % % % % % % % % % % % % % % % % % % % % % % % % % % % % % % % % % % % %

\section{Introduction} \label{1}
\medskip

The study of non-unique factorization properties in algebraic structures is a central theme in factorization theory, and it is, to a large extent, investigated by making use of methods from arithmetic combinatorics.
Thus, to systematically understand the phenomena of non-unique factorizations in (transfer) Krull monoids and C-monoids, combinatorial models play a crucial role.
Among these, the monoid of product-one sequences over a finite group stands out as a fundamental combinatorial object that has been a central subject of study in arithmetic combinatorics and has deep connections to various branches of mathematics, most notably factorization theory and invariant theory.
Roughly speaking, in invariant theory, the (separating) Noether number--a fundamental invariant of polynomial invariant rings--is strictly governed by the Davenport constant of the underlying group.
For a sample of recent developments in these directions, we refer the reader to \cite{Cz-Do-Ge16,Cz-Do-Sz18,Do-Sc24,Do-Sc25,Sc25a,Sc25b,Sc-Zha-Zh26,Sc-Zha-Zh27,Hu26} for invariant theory and to \cite{Ge-Gr13,Gr13b,Oh19,Oh20,Oh-Zh20a,Ge-Gr-Oh-Zh22,Qu-Li-Tee25,Ge-Oh25,Oh26,Oh26b} for factorization theory.

To be precise, let $G$ be a multiplicatively written (not necessarily finite) group and $K$ be a normal subgroup of $G$.
By a sequence, we mean a finite collection of terms from $G$, where repetition is allowed, and the order is disregarded.
In an algebraic context, a sequence can be viewed as an element in the free abelian monoid $\mathcal F (G)$ with basis $G$.
A sequence is said to be product-$K$ if its terms can be ordered such that their product in $G$ belongs to $K$.
The set $\mathcal B_K (G)$ of product-$K$ sequences over $G$ forms a submonoid of $\mathcal F (G)$, and is called the monoid of product-$K$ sequences.

When $G$ is abelian, additive notation is predominantly used, and product-$K$ sequences are referred to as $K$-sum sequences.
The corresponding theory has been extensively developed.
While the case of trivial $K$ dates back to its initial study in 1961, the more general notion of $K$-sum sequences with non-trivial $K$ was introduced by Halter-Koch (see \cite{HK92}) in the study of the arithmetic of residue classes.
Later, in \cite{Ba-Ho09}, the authors studied atoms in the monoid $\mathcal B_K (G)$.
Recently, the monoid $\mathcal B_K (G)$ has served as a crucial tool for the Characterization Problem for an abelian group of rank 2 (see \cite{Ge-Sc19b}). 
The Characterization Problem--the standing conjecture in factorization theory asking whether the non-unique factorization phenomena of the monoid of (weighted) product-one sequences uniquely determine the given finite group--has attracted wide attention and received an affirmative answer for almost all abelian groups (see \cite{Ge-Zh15,Zh18a,Ge-Sc19b}).
Exploration of this problem has also begun in the non-abelian setting, where it has been verified for certain non-abelian groups (see \cite{Oh19,Ge-Gr-Oh-Zh22}).
In a very recent year, in \cite{Ge-Oh25}, the authors studied a necessary condition for an affirmative answer to the Characterization Problem for general groups, known as the Isomorphism Problem.
The Isomorphism Problem asks whether the monoid structure uniquely determines the structure of the underlying group, and it has been verified for torsion groups (see \cite[Theorem 1.1]{Ge-Oh25}).
However, the Isomorphism Problem for $\mathcal B_K(G)$ fails to hold even in the abelian case.
More precisely, if $G_1$ and $G_2$ are abelian groups with normal subgroups $K_1$ and $K_2$ respectively, and the monoids $\mathcal B_{K_1} (G_1)$ and $\mathcal B_{K_2} (G_2)$ are isomorphic as monoids, then we can only conclude that $G_1 / K_1$ and $G_2 / K_2$ are isomorphic as abelian groups.
This indicates that, in the non-abelian setting, the monoid $\mathcal B_K (G)$ is a significantly more intricate object.
Nevertheless, just as it provided crucial leverage in the abelian setting, developing the theory of $\mathcal B_K (G)$ in the non-abelian setting is strongly anticipated to open new avenues for tackling the Characterization Problem for non-abelian groups.

Independently of its applications to the Characterization Problem, the monoid $\mathcal B_K (G)$ is of particular interest for certain normal subgroups $K$.
If $K$ is trivial, then $\mathcal B_K (G)$ is the monoid of product-one sequences, whereas if $K$ is the commutator subgroup of a torsion group $G$, then $\mathcal B_K (G)$ is the complete integral closure of the monoid of product-one sequences.
This provides further motivation for studying the monoid $\mathcal B_K (G)$.

In this paper, we investigate the algebraic and arithmetic properties of analogous monoids in the non-abelian setting.
Among our main contributions, we provide precise algebraic characterizations for the monoid $\mathcal B_K (G)$.
Specifically, in the case where $G/K$ is a torsion group, we establish the exact conditions under which the monoid $\mathcal B_K (G)$ is (transfer) Krull, seminormal, or (half-)factorial (see Theorems~\ref{thm:struc}, \ref{thm:transfer}, and Corollary \ref{thm:seminormal}).
Furthermore, we show that arithmetic properties for the monoid $\mathcal B_K (G)$ can be effectively studied via product-one sequences over a suitable group.
As a major application of this, we deduce that the union of sets of lengths for the monoid $\mathcal B_K (G)$ forms an interval (see Theorem~\ref{thm:transfer}).

% % % % % % % % % % % % % % % % % % % % % % % % % % % % % % % % % % % % % % % % % % % % % % % % % % % % % %
% % %             % % % % % % %               % % % % % % %               % % % % % % %               % % %
% % %             % % % % % % %               % % % % % % %               % % % % % % %               % % %
% % % % % % % % % % % % % % % % % % % % % % % % % % % % % % % % % % % % % % % % % % % % % % % % % % % % % %

\medskip
\section{Preliminaries} \label{2}
\medskip

We denote by $\mathbb{N}$ the set of positive integers and $\mathbb N_0 := \mathbb N \cup \{ 0 \}$.
For real numbers $a,b\in\mathbb{R}$, we write $[a,b]=\{x\in\mathbb{Z} \mid a\le x\le b\}$ for the discrete interval from $a$ to $b$.

By a \textit{monoid}, we always mean a commutative, cancellative semigroup with the identity element, and we usually use the multiplicative notation.
Let $H$ be a monoid.
We denote by 
\begin{itemize}
\item $H^{\times}$ the group of invertible elements of $H$,

\smallskip
\item $\mathsf q (H)$ the quotient group of $H$, and

\smallskip
\item $\mathcal A (H)$ the set of atoms (or irreducible elements) of $H$.
\end{itemize} 
For a set $P$, we let $\mathcal{F}(P)$ denote the free abelian monoid with basis $P$. Every $a\in \mathcal{F}(P)$ has a unique representation in the form 
\[
  a=\prod_{p\in P}p^{\mathsf v_p(a)} \in \mathcal F (P) \,,
\]
where $\mathsf v_p: H\longrightarrow \N_0$ is the $p$-adic valuation of $a$.

We also denote by
\begin{itemize}
    \item $H'=\{x\in \mathsf{q}(H) \mid \textup{there is an } N \textup{ such that } x^n\in H \textup{ for all } n\geq N\}$ the \textit{seminormal closure} of $H$,
    
    \smallskip
   \item  $\widetilde{H}=\{x\in \mathsf{q}(H) \mid x^N\in H \textup{ for some } N\in \N\}$ the \textit{root closure} of $H$, and
   
   \smallskip
   \item $\widehat{H}=\{x\in \mathsf{q}(H) \mid \textup{there is a } c\in H \textup{ such that } cx^n\in H \textup{ for all } n\in \N\}$ the \textit{complete integral closure} of $H$.
\end{itemize}
Note that $H\subseteq H'\subseteq \widetilde{H}\subseteq \widehat{H}\subseteq \mathsf q(H)$.
The monoid $H$ is called \textit{seminormal} if $H=H'$, \textit{root closed} if $H=\widetilde{H}$, and \textit{completely integrally closed} if $H=\widehat{H}$.

A monoid homomorphism $\varphi \colon H \to D$ is called 
\begin{itemize}
    \item \textit{cofinal} if, for every $x\in D$, there exists $h\in H$ such that $x \mid \varphi(h)$,
    
    \smallskip
    \item a \textit{divisor homomorphism} if, for all $a,b\in H$, the divisibility relation $\varphi(a)\mid \varphi(b)$ implies $a\mid b$,
    
    \smallskip
    \item a \textit{divisor theory} if $D$ is free abelian, $\varphi$ is a divisor homomorphism, and, for all $x\in D$, there exist $a_1,\ldots, a_n\in H$ such that $x=\gcd (\varphi(a_1),\ldots, \varphi(a_n))$, and 
    
    \smallskip
    \item a \textit{transfer homomorphism} if the following conditions hold:
    		\begin{itemize}
		\smallskip
		\item[\bf{(T1)}] $D=\varphi(H)D^\times$ and $\varphi^{-1}(D^\times)=H^\times$.
		
		\smallskip
		\item[\bf{(T2)}] If $u\in H$ and $b,c\in D$ with $\varphi(u)=b \cdot c$, then there exist $v,w\in H$ such that $u=v \cdot w$, $\varphi(v)D^\times=bD^\times$ and $\varphi(w)D^\times=cD^\times$.
		\end{itemize}
\end{itemize}
For monoids $H \subseteq D$, we say that $H$ is {\it divisor-closed submonoid} of $D$ if, for all $a \in H$ and $x \in D$, we have that $x \mid a$ in $D$ implies $x \in H$.
%Let $S\subseteq H$ be a submonoid of $H$. Then $S$ is said to be 
%\begin{itemize}
%   \item \textit{saturated} if the inclusion map $i: S\longrightarrow H$ is a dividor homomorphism.
%  \item \textit{divisor-closed} if, for all $s\in S$ and $h\in H$,  $h\mid s$ implies $h\in S$.
%\end{itemize}

\subsection{Arithmetical concepts for monoids}
Let $H$ be an atomic monoid, i.e., every element in $H \setminus H^{\times}$ is a finite product of atoms.
If $a \in H \setminus H^{\times}$ has a factorization $a = u_1 \cdot \ldots \cdot u_k$ for $k \in \mathbb N$ and $u_i \in \mathcal A (H)$, then $k$ is called the length of the factorization of $a$, and we denote by
\[
  \mathsf L (a) = \{ k \in \mathbb N \mid a \mbox{ has a factorization of length $k$} \}
\]
the {\it set of lengths} of $a$.
As usual, if $a \in H^{\times}$, then we set $\mathsf L (a) = \{ 0 \}$.
The monoid $H$ is said to be
\begin{itemize}
\item {\it factorial} if every $a \in H$ has a unique factorization into atoms,

\smallskip
\item {\it half-factorial} if $| \mathsf L (a) | = 1$ for every $a \in H$, and 

\smallskip
\item a {\it BF-monoid} (a bounded factorization monoid) if $\mathsf L (a)$ is finite and non-empty for every $a \in H$. 
\end{itemize}
Clearly, every factorial monoid is half-factorial, and every half-factorial monoid is a BF-monoid.
We denote by $\mathcal L (H) = \{ \mathsf L (a) \mid a \in H \}$ the {\it system of sets of lengths} of $H$.
If $k \in \mathbb N$ and $H \neq H^{\times}$, then
\[
  \mathcal U_k (H) = \bigcup_{k\in \mathsf L, \mathsf L\in \mathcal{L}(H)} \mathsf L \subseteq \mathbb N 
\]
denotes the {\it union of sets of lengths containing k}, and we set $\rho_k (H) = \sup \mathcal U_k (H)$ the {\it k-th elasticity} of $H$.
Moreover,
\begin{equation} \label{eq:rho}~
  \rho (H) = \sup \left\{ \frac{\sup \mathsf L (a)}{\min \mathsf L (a)} \, \Big\vert \, a \in H \right\} =  \sup \left\{ \frac{\rho_k (H)}{k} \, \Big\vert \, k \in \mathbb N \right\} = \lim_{k \to \infty} \frac{\rho_k (H)}{k}
\end{equation}
is the {\it elasticity} of $H$ (see \cite[Proposition 1.1.13]{Ge-Gr-Zh26} for a details).
We say that $H$ has $\textit{accepted elasticity}$ if there exists some \(a\in H\) such that $\rho(H) = \rho \big( \mathsf L(a) \big)$.
With these notions, the monoid $H$ is half-factorial if and only if $\mathcal U_k (H) = \{ k \}$ for every $k \in \mathbb N$ if and only if $\rho_k (H) = k$ for every $k \in \mathbb N$ if and only if $\rho (H) = 1$.
%We denote by $\Delta(H)=\bigcup_{L\in \mathcal{L}(H)}\Delta(L)$ the \textit{set of distances} of $H$, and by $\rho(H)=\textup{sup }\{\rho(L): L\in\mathcal{L}(H)\}$ the \textit{elasticity} of $H$. 

\subsection{Krull and Transfer Krull monoids}
All arithmetical concepts discussed before have been well understood for Krull monoids.
Let $H$ be a monoid.
We denote by
\begin{itemize}
\item $s$-$\spec (H)$ the set of prime $s$-ideal of $H$,

\smallskip
\item $\mathfrak X (H)$ the set of minimal non-empty prime $s$-ideals of $H$, and

\smallskip
\item $H_{\mathfrak p} = (H \setminus \mathfrak p)^{-1} H \subseteq \mathsf q (H)$ the localization at a prime ideal $\mathfrak p \in s$-$\spec (H)$.
\end{itemize}
The monoid $H$ is said to be {\it weakly Krull} if the following two conditions hold:
\begin{itemize}
\item[(i)] $H = \bigcap_{\mathfrak p \in \mathfrak X (H)} H_{\mathfrak p}$.

\smallskip
\item[(ii)] Each element of $H$ is contained in only finitely many prime $s$-ideals in $\mathfrak X (H)$.
\end{itemize}
A weakly Krull domain generalizes one-dimensional Noetherian domains, but they need not be integrally closed.
For instance, every order in algebraic number fields, and every Cohen-Kaplansky domain, is weakly Krull.

The monoid $H$ is a {\it Krull monoid} if one of the following equivalent conditions is satisfied (see \cite[Chapter 2]{Ge-HK06}):
\begin{itemize}
\item[(a)] $H$ is a weakly Krull monoid and $H_{\mathfrak p}$ is a discrete valuation monoid for all $\mathfrak p \in \mathfrak X (H)$, i.e., $H_{\mathfrak p}$ is factorial and has exactly one prime element.

\smallskip
\item[(b)] $H$ is completely integrally closed and a Mori monoid, i.e., $H$ satisfies the ascending chain condition on divisorial ideals.

\smallskip
\item[(c)] $H$ has a divisor theory.

\smallskip
\item[(d)] There is a divisor homomorphism from $H$ into a free abelian monoid.
\end{itemize}
If $H$ is a Krull monoid, then a divisor theory $\varphi \colon H \to F$ (by condition (c)) is unique up to isomorphism and
\[
  \mathcal C (H) = \mathsf q (F) / \mathsf q \big( \varphi (H) \big)
\]
is called the {\it class group} of $H$.

The monoid $H$ is said to be a \textit{transfer Krull monoid} if it has a transfer homomorphism to a Krull monoid.
The most significant feature of a transfer homomorphism from $H$ to $M$ is as follows: it allows us to pull back arithmetical properties from $M$, which we hope is simpler, to the original monoid $H$.

\smallskip
\begin{lemma} $($\cite[Lemma 1.4.2]{Ge-Gr-Zh26}$)$ \label{lem:trans}~
Let $H$ and $M$ be atomic monoids and $\theta \colon H \to M$ be a transfer homomorphism.
\begin{enumerate}
\item $u \in \mathcal A (H)$ if and only if $\theta (u) \in \mathcal A (M)$.

\smallskip
\item $\mathcal L (H) = \mathcal L (M)$, so that $\mathcal U_k (H) = \mathcal U_k (M)$, $\rho_k (H) = \rho_k (M)$ for every $k \in \mathbb N$, and $\rho (H) = \rho (M)$.

\smallskip
\item $H$ is half-factorial (or has accepted elasticity) if and only if $M$ is half-factorial (or has accepted elasticity).
\end{enumerate}
\end{lemma}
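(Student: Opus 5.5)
The plan is to prove the three parts in order, with (2) the heart of the matter: (1) is the local version of (2), and (3) is a formal consequence of (2). Throughout I use (T1) in the form $\theta(H^\times) = M^\times$. Indeed, $\varphi^{-1}(M^\times)=H^\times$ gives $\theta(H^\times)\subseteq M^\times$. Conversely, for $\varepsilon\in M^\times$, (T1) writes $\varepsilon=\theta(h)\eta$ with $\eta\in M^\times$, so $\theta(h)\in M^\times$ and hence $h\in H^\times$.

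For (1), I would argue by contraposition in both directions. Suppose $u\in H$ is not an atom.
\begin{itemize}
\item If $u\in H^\times$, then $\theta(u)\in M^\times$.
\item If $u=vw$ with $v,w\notin H^\times$, then $\theta(u)=\theta(v)\theta(w)$ with both factors non-units, by $\theta^{-1}(M^\times)=H^\times$.
\end{itemize}
In either case $\theta(u)$ is not an atom. Conversely, suppose $\theta(u)$ is not an atom.
\begin{itemize}
\item If $\theta(u)$ is a unit, then $u$ is a unit.
\item If $\theta(u)=bc$ with $b,c$ non-units, then (T2) gives $u=vw$ with $\theta(v)M^\times=bM^\times$ and $\theta(w)M^\times=cM^\times$. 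So $\theta(v),\theta(w)$ are non-units, and hence $v,w$ are non-units.
\end{itemize}

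For (2), I would show $\mathsf L_H(a)=\mathsf L_M(\theta(a))$ for every $a\in H$.

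For ``$\subseteq$'': apply $\theta$ to a factorization $a=u_1\cdots u_k$ and use (1). Units go to units, so the case $\mathsf L(a)=\{0\}$ is consistent.

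For ``$\supseteq$'': let $\theta(a)=v_1\cdots v_k$ with $v_i\in\mathcal A(M)$, and induct on $k$. By (T2) applied to $b=v_1$ and $c=v_2\cdots v_k$, we get $a=u_1a'$ with $\theta(u_1)=v_1\varepsilon$ and $\theta(a')=c\varepsilon'$ for units $\varepsilon,\varepsilon'$. Then $\theta(u_1)$ is an atom (an associate of an atom), so $u_1\in\mathcal A(H)$ by (1). Also $\theta(a')=(v_2\varepsilon')v_3\cdots v_k$ is a product of $k-1$ atoms, so induction gives $k-1\in\mathsf L(a')$, and hence $k\in\mathsf L(a)$.

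This yields $\mathcal L(H)\subseteq\mathcal L(M)$. For the reverse inclusion, take $x\in M$. By (T1), $x=\theta(h)\varepsilon$ with $\varepsilon\in M^\times$, and $\mathsf L(x)=\mathsf L(\theta(h))$ since associates have the same sets of lengths.

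The remaining claims of (2) ($\mathcal U_k$, $\rho_k$, $\rho$) are defined purely in terms of $\mathcal L(\cdot)$, so they follow immediately. The only place needing care is the induction step, where one must absorb the units produced by (T2) into an atom; this is the main (mild) obstacle.

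For (3), both properties are read off from the system of sets of lengths. Half-factoriality means every $L\in\mathcal L$ is a singleton. Accepted elasticity means that $\rho$ (which is equal for $H$ and $M$ by (2)) equals $\rho(L)$ for some $L\in\mathcal L$. Since $\mathcal L(H)=\mathcal L(M)$, both equivalences are immediate.
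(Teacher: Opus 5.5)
Your proof is correct and follows the standard route: first atoms, then $\mathsf L_H(a)=\mathsf L_M(\theta(a))$ by induction on the length using (T2). The paper gives no proof of its own and only cites this lemma from the literature. There is one false auxiliary claim, though it is not load-bearing.

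The statement $\theta(H^\times)=M^\times$ that you announce at the start does not hold in general, and your justification does not prove it. From $\varepsilon=\theta(h)\eta$ you only get $h\in H^\times$, not $\varepsilon\in\theta(H^\times)$. For a counterexample, let $H$ be any monoid and $A$ a nontrivial abelian group. Then $h\mapsto(h,1_A)$ is a transfer homomorphism $H\to H\times A$, but $\theta(H^\times)=H^\times\times\{1_A\}\subsetneq H^\times\times A=(H\times A)^\times$.

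Fortunately you never use the inclusion $M^\times\subseteq\theta(H^\times)$. Every step of (1) and (2) needs only the equivalence $u\in H^\times\Leftrightarrow\theta(u)\in M^\times$, which is exactly the condition $\theta^{-1}(M^\times)=H^\times$ in (T1). The other half of (T1), $M=\theta(H)M^\times$, is used only (and correctly) for $\mathcal L(M)\subseteq\mathcal L(H)$. Replace the opening claim by this equivalence and the proof stands.
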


Clearly, every Krull monoid is a transfer Krull monoid, and we refer the reader to \cite{Ge16,Ge-Zh20,Ba-Re22,Ba-Po26,Ge-Gr-Zh26} for background and further examples of transfer Krull monoids.

\subsection{Class semigroups and $C$-monoids}
Let $H \subseteq F$ be monoids.
The \textit{class semigroup} of $H$ in $F$ is the quotient semigroup
\[
  \mathcal C (H,F) = \big\{ [y]^{F}_H \mid y \in F \big\} \,,
\]
where, for $y, y' \in F$, we set $y \sim_H y'$ if and only if $y^{-1}F\cap H=(y')^{-1}F\cap H$.
The congruence class of $y \in F$ is denoted by $[y]^{F}_H$.
As usual, the operation is defined by $[x]^{F}_H + [y]^{F}_H = [xy]^{F}_H$, which makes $\mathcal C(H,F)$ into a semigroup.
The \textit{reduced class semigroup} of $H$ in $F$ is the subsemigroup
\[
  \mathcal C^*(H,F) = \big\{[y]^{F}_H \mid y \in (F \setminus F^\times) \cup \{ 1 \} \big\} \,.
\]

A monoid $H$ is called a {\it C-monoid} if $H$ is a submonoid of a factorial monoid $F$ such that $H \cap F^{\times} = H^{\times}$ and the reduced class semigroup $\mathcal C^*(H,F)$ is finite.
A C-monoid is a multiplicative model for studying the arithmetic of non-integrally closed Noetherian domains, and thus it serves as a non-completely integrally closed analogue of Krull monoids.
We refer the reader to \cite{Re13,Ge-Zh19,Oh22} for algebraic structures of C-monoids and their class semigroup.
The following results present the algebraic structure of finitely generated monoids.

\smallskip
\begin{lemma} $($\cite[Proposition 2.6]{Cz-Do-Ge16}$)$ \label{lem:C}~
Let $H$ be a finitely generated monoid.
\begin{enumerate}
\item $H$ is a Mori monoid with $( H \colon \widehat{H} ) \neq \emptyset$, $\widetilde{H} = \widehat{H}$, $\widehat{H} / H^{\times}$ is finitely generated, and $\widehat{H}$ is a Krull monoid.

\smallskip
\item If $H$ is a submonoid of a factorial monoid $F = F^{\times} \times \mathcal F (P)$, then the following statements are equivalent:
	\begin{enumerate}
	\smallskip
	\item[(a)] $H$ is a C-monoid defined in $F$, $F^{\times}/H^{\times}$ is a torsion group, and for every $p \in P$, there is an $a \in H$ such that $\mathsf v_p (a) \ge 1$.

	\smallskip
	\item[(b)] For every $a \in F$, there is an $n_a \in \mathbb N$ with $a^{n_a} \in H$.
	\end{enumerate} \smallskip
	If this is the case, then $P$ is finite and $\widetilde{H} = \widehat{H} = \mathsf q (H) \cap F$.
\end{enumerate}
\end{lemma}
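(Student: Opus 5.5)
Throughout I work with the reduced monoid $H_{\mathrm{red}}=H/H^{\times}$ where convenient. Since $H$ is finitely generated, so is $H_{\mathrm{red}}$, and $\mathsf q(H)$ is a finitely generated abelian group. I regard $\mathsf q(H)/\mathrm{tors}$ as a lattice in $V=\mathsf q(H)\otimes\mathbb R$, and I let $C\subseteq V$ be the closed rational polyhedral cone spanned by the images of the finitely many generators of $H$.

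\emph{Mori property.} I would obtain this from the ascending chain condition on $s$-ideals. Every $s$-ideal of $H$ is generated by finitely many elements, by a Dickson-type argument: equivalently, monomial ideals of the Noetherian semigroup ring $\mathbb Q[H_{\mathrm{red}}]$ are finitely generated. Since divisorial ideals are in particular $s$-ideals, the ascending chain condition on divisorial ideals follows.

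\emph{The cone description of $\widehat H$.} The core of part~1 is to show
\[
\widehat H=\widetilde H=\{x\in\mathsf q(H)\mid \bar x\in C\}.
\]
For ``$\subseteq$'': if $cx^n\in H$ for all $n$, then $\bar c+n\bar x\in C$ for all $n$. Dividing by $n$ and using that $C$ is closed gives $\bar x\in C$. For the converse: if $\bar x\in C$, then $\bar x$ is a nonnegative rational combination of the generators, because $C$ is rational. Clearing denominators gives $x^N\in H\cdot\mathrm{tors}(\mathsf q(H))$, and raising to a further power kills the torsion part, so $x^{N'}\in H$. This yields $\widetilde H=\widehat H$, and both equal the lattice points of $C$ together with the torsion.

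\emph{Consequences for $\widehat H$.} By Gordan's lemma, $\widehat H/H^{\times}$ is finitely generated. Since $\widehat H$ is root closed and is cut out by the finitely many facet functionals of $C$, these functionals give a divisor homomorphism into $\mathbb N_0^{r}$. Hence $\widehat H$ is Krull by criterion~(d). For the conductor, I write the finitely many generators $y_1,\dots,y_m$ of $\widehat H$ (modulo units). Each satisfies $y_i^{N_i}\in H$, so $\widehat H=\bigcup_{0\le e_i<N_i} y_1^{e_1}\cdots y_m^{e_m}H$ is a finite union of cosets. Any common denominator $c\in H$ of these finitely many elements then lies in $(H\colon\widehat H)$.

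\emph{Part 2, (b)$\Rightarrow$(a).} Applying (b) to $a=p$ gives $p^{n_p}\in H$. Hence every $p\in P$ occurs in the support of some element of $H$, which gives the valuation condition. Moreover, every such $p$ must divide one of the finitely many generators of $H$, so $P$ is finite. Applying (b) to units shows that $F^{\times}/H^{\times}$ is torsion; in particular $H\cap F^{\times}=H^{\times}$.

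Next, (b) together with part~1 gives $\widehat H=\mathsf q(H)\cap F$. Indeed, any $x\in\mathsf q(H)\cap F$ has a power in $H$, and conversely every $x\in\widehat H$ satisfies $x^N\in H\subseteq F$, which forces $x\in F$ because $F$ is root closed. I then pick a conductor element $f\in(H\colon\widehat H)$. Since $f$ has positive valuation at every $p$ (after replacing it by a product), and $F^{\times}/H^{\times}$ is torsion of finite exponent on the relevant finitely generated part, the class of $y=\varepsilon\prod p^{k_p}$ in $\mathcal C(H,F)$ is determined by two pieces of data. The first is $\varepsilon$ modulo $H^{\times}$, up to finitely many possibilities. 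The second is the vector $(\min(k_p,K))_p$ together with $k_p$ modulo a fixed period. This gives finiteness of $\mathcal C^*(H,F)$.

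\emph{Part 2, (a)$\Rightarrow$(b).} By finiteness of the class semigroup, the sequence of classes $[a^n]$ is eventually periodic. Combined with the valuation condition, this lets me find, for each $p$, an element of $H$ in which $p$ appears and which has the right class. Then some power of $a$ lies in $H$; the unit part is handled by the torsion hypothesis on $F^{\times}/H^{\times}$. The final assertion $\widetilde H=\widehat H=\mathsf q(H)\cap F$ was already established in the proof of (b)$\Rightarrow$(a).

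I expect the main obstacle to be the finiteness of the class semigroup in (b)$\Rightarrow$(a). There one must carefully produce a uniform threshold $K$ beyond which large exponents do not change the class. I would derive this threshold from the conductor element.
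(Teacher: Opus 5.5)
\textbf{Part 1.} The paper gives no proof of this lemma; it is quoted from [Cz-Do-Ge16, Proposition 2.6], so your argument has to stand on its own. Your part 1 is sound: the cone description $\widehat H=\widetilde H=\{x\in\mathsf q(H)\mid \bar x\in C\}$, Gordan's lemma, the facet functionals giving a divisor homomorphism into $\mathbb N_0^r$, the conductor obtained from finitely many cosets $y_1^{e_1}\cdots y_m^{e_m}H$, and the Mori property via Noetherianity of $\mathbb Q[H_{\mathrm{red}}]$ all work. The problems are in part 2, in both directions.

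\textbf{(b)$\Rightarrow$(a).} A small point first: $H\cap F^\times=H^\times$ must be shown before torsion of $F^\times/H^\times$. It follows from (b) via $h^{-1}=h^{n-1}(h^{-1})^{n}$.

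The real problem is your claim that $\varepsilon$ modulo $H^\times$ contributes only finitely many possibilities. Condition (b) makes $F^\times/H^\times$ torsion, not finite, and the claim is false. Suppose (b) holds, $y_0\in F\setminus F^\times$ and $\varepsilon'\varepsilon^{-1}\notin\mathsf q(H)$. Then $(\varepsilon y_0)^{-1}H\cap F$ and $(\varepsilon' y_0)^{-1}H\cap F$ are nonempty by (b), and they are disjoint, since a common element would put $\varepsilon'\varepsilon^{-1}$ in $\mathsf q(H)$. The group $(\mathsf q(H)\cap F^\times)/H^\times$ is finitely generated and torsion, hence finite. So $\mathcal C^*(H,F)$ is infinite whenever $P\neq\emptyset$ and $F^\times/H^\times$ is infinite.

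For example, let $E$ be an infinite elementary abelian $2$-group, $F=E\times\mathcal F(\{p\})$ and $H=\{p^n\mid n\in\mathbb N_0\}$. Every $a\in F$ has $a^2\in H$, but $(\varepsilon p)^{-1}H\cap F=\{\varepsilon p^n\mid n\in\mathbb N_0\}$ for $\varepsilon\in E$, so these classes are pairwise distinct. Thus the implication as quoted needs an extra hypothesis such as $F^\times/H^\times$ finite. This is harmless for the paper, where $F=\mathcal F(G)$ is reduced.

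Even then, the threshold $K$ cannot be read off from a conductor element. A conductor element only controls elements all of whose exponents are large, but $y^{-1}H\cap F$ is also tested against $x$ that keep some exponents of $xy$ small. Take $H=\{p^aq^b\mid b\ge1\}\cup\{p^a\mid a\in\langle 10,11\rangle\}\subseteq\mathcal F(\{p,q\})$. Both $q$ and $p^{10}q$ lie in $(H\colon\widehat H)$, yet $p^{89}\notin H$ although $p^{89+d}\in H$ for every $d\ge 1$. So no period together with $K\le 89$ separates the classes. What is needed is an induction over the sets $R\subseteq P$ of primes whose exponents stay small. For each $u\in\mathcal F(R)$ of bounded exponents, the fibre $\{w\in\mathcal F(P\setminus R)\mid uw\in H\}$ is a finitely generated module over the face monoid $H\cap\mathcal F(P\setminus R)$. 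Then $K$ must be chosen beyond the conductors of all face monoids, shifted by these module generators.

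\textbf{(a)$\Rightarrow$(b).} Your sketch never uses that $H$ is finitely generated, and without it the implication fails. Take $H=\{1\}\cup\{y\in\mathcal F(\{p,q\})\mid \mathsf v_p(y)\ge1,\ \mathsf v_q(y)\ge1\}$. The class of $y\neq1$ depends only on whether $\mathsf v_p(y)\ge1$ and whether $\mathsf v_q(y)\ge1$, so this is a C-monoid in $\mathcal F(\{p,q\})$. Both primes occur in $pq\in H$, yet no power of $p$ lies in $H$. So eventual periodicity of $[a^n]$ plus the valuation condition cannot by itself produce an $h\in H$ with $[h]=[a^n]$.

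Finite generation enters as follows. Let $C\subseteq\mathbb R^P$ be the cone spanned by the exponent vectors $\mathbf v(h)=(\mathsf v_q(h))_{q\in P}$ of the finitely many generators. Here $P$ is finite because each $p$ divides some generator. Let $e_p$ denote the unit vector at $p$.
\begin{itemize}
\item If $e_p\in C$, rationality gives $\prod h_i^{n_i}=\varepsilon p^N$ with $\varepsilon\in F^\times$ and $N\ge1$. Torsion of $F^\times/H^\times$ then yields a power of $p$ in $H$.
\item If $e_p\notin C$, pick a linear form $\ell\ge0$ on $C$ with $\ell(e_p)<0$. Every $x$ with $xp^n\in H$ satisfies $\ell(\mathbf v(x))\ge n|\ell(e_p)|$. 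Each $(p^n)^{-1}H\cap F$ is nonempty by the valuation condition, and a fixed $x$ drops out of $(p^m)^{-1}H\cap F$ for large $m$. Hence infinitely many classes $[p^n]$ are distinct, a contradiction.
\end{itemize}
Combining the powers of the primes in the support of $a$ with a power of its unit part then gives $a^{n_a}\in H$.
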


\subsection{Product-$K$ sequences}
Let $G$ be a group with identity $1_G$.
A sequence is an element of the free abelian monoid $\mathcal F (G)$ with basis endowed with the concatenation of sequences as the operation, i.e., $S \in \mathcal F (G)$ has the form
\begin{equation} \label{eq:sequence}~
  S = g_1 \bdot \ldots \bdot g_{\ell} = {\small \prod}^{\bullet}_{g \in G} g^{[\mathsf v_g (S)]} \,,
\end{equation}
where $g_1, \ldots, g_{\ell} \in G$, $\mathsf v_g (S) = |\{ i \in [1,\ell] \mid g_i = g \}|$ is the {\it multiplicity} of $g$ in $S$, and $|S| = \ell = \sum_{g \in G} \mathsf v_g (S)$ is the {\it length} of $S$.
We denote by $\supp (S) = \{ g \in G \mid \mathsf v_g (S) \ge 1 \}$ the {\it support} of $S$.
For sequences $S, T \in \mathcal F (G)$, we say that $T$ is a {\it divisor} of $S$ in $\mathcal F (G)$, denoted by $T \mid S$, if $S = T \bdot W$ for some $W \in \mathcal F (G)$.
In this case, $T$ is called a {\it subsequence} of $S$, and $S \bdot T^{[-1]} := W = {\small \prod}^{\bullet}_{g \in G} g^{[\mathsf v_g (S) - \mathsf v_g (T)]} \in \mathcal F (G)$ denotes the subsequence of $S$ obtained by deleting terms from $T$ in $S$. %Let $S \in \mathcal F (G)$ be as in (\ref{eq:sequence}).
The {\it set of products} of $S$ is denoted by
\[
  \pi (S) = \{ g_{\sigma (1)} \cdots g_{\sigma (\ell)} \in G \mid \sigma \mbox{ is a permutation on} [1, \ell] \} \,.
\]

For a normal subgroup $K$ of $G$, denoted by $K \unlhd G$, the sequence $S$ is called {\it product-$K$} if $\pi (S) \cap K \neq \emptyset$, and then the set $\mathcal B_K (G)$ of all product-$K$ sequences forms a submonoid of $\mathcal F (G)$ with $\mathcal A_K (G) := \mathcal A \big( \mathcal B_K (G) \big)$.
Moreover, we define
\[
  \mathsf D_K (G) := \mathsf D \big( \mathcal B_K (G) \big) = \sup \{ |S| \mid S \in \mathcal A_K (G) \} \,.
\]
Since the commutator subgroup $G'$ is normal in $G$, we observe that $KG'$ is also a normal subgroup of $G$ containing $G'$, so that $G/ KG'$ is an abelian group.
Thus, for any $S \in \mathcal F (G)$, we can view $\pi (S)$ as a subset of a $KG'$-coset.
We denote by $\phi_K \colon G \to G/K$ the natural group epimorphism.
By abuse of notation, we also denote $\phi_K \colon \mathcal F (G) \to \mathcal F \left( G/K \right)$ for the extension of $\phi_K$ to sequences, i.e., $\phi_K (g_1 \bdot \ldots \bdot g_{\ell}) = \phi_K (g_1) \bdot \ldots \bdot \phi_K (g_{\ell})$.
With this notation, we denote by $\ord_K (g) := \ord \big( \phi_K (g) \big)$ for every $g \in G$, and moreover it is straightforward to obtain that, for every $S \in \mathcal F (G)$,
\begin{equation} \label{eq:phi}~
  \pi (S) \cap K \neq \emptyset \quad \mbox{ if and only if } \quad \phi_K (S) \in \mathcal B \left( G/K \right) \,.
\end{equation}

The case where $K = \{ 1_G \}$ is trivial is of particular interest.
In this case, product-$K$ sequences are called product-one sequences, and $\mathcal B (G):= \mathcal B_K (G)$ denotes the monoid of product-one sequences.
Then, $\mathcal B (G)$ is Krull if and only if $G$ is abelian, in which case its arithmetic reflects that of general Krull monoids by the following proposition together with Lemma~\ref{lem:trans}.

\smallskip
\begin{proposition} $($\cite[Theorem 1.4.5]{Ge-Gr-Zh26}$)$ \label{pro:Krull}~
Let $H$ be a Krull monoid with divisor theory $\varphi \colon H \to \mathcal F (P)$ and the class group $G$ in which each class contains a prime divisor.
Let $\widetilde{\boldsymbol{\beta}} \colon \mathcal F (P) \to \mathcal F (G)$ denote the unique homomorphism defined by $\widetilde{\boldsymbol{\beta}} (p) = [p] \in G$ for all $p \in P$.
Then, the monomorphism $\boldsymbol{\beta} \colon H \to \mathcal B (G)$ is a transfer homomorphism.
In particular, $\mathsf L (a) = \mathsf L \big( \boldsymbol{\beta} (a) \big)$ for all $a \in H$ and $\mathcal L (H) = \mathcal L \big( \mathcal B (G) \big)$.
\end{proposition}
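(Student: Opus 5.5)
We take $\boldsymbol{\beta} := \widetilde{\boldsymbol{\beta}} \circ \varphi$ and verify \textbf{(T1)} and \textbf{(T2)} directly. The key intermediate fact is a saturation property of the divisor theory: $\varphi(H) = \mathsf q\big(\varphi(H)\big) \cap \mathcal F(P)$. Everything else is bookkeeping with prime divisors and their classes. The statements about lengths then follow from Lemma~\ref{lem:trans}.

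\emph{Step 1: saturation.} Let $x \in \mathcal F(P)$ with $x \in \mathsf q(\varphi(H))$, and write $x = \varphi(a)\varphi(b)^{-1}$ with $a,b \in H$. Then $\varphi(b) \mid \varphi(a)$ in $\mathcal F(P)$. Since $\varphi$ is a divisor homomorphism, this gives $b \mid a$ in $H$, so $x = \varphi(a b^{-1}) \in \varphi(H)$.

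\emph{Step 2: a membership criterion.} Identify $G$ with $\mathsf q(\mathcal F(P))/\mathsf q(\varphi(H))$ and write it additively (it is abelian). For $x = p_1 \cdot \ldots \cdot p_\ell \in \mathcal F(P)$ the class of $x$ is $[p_1] + \cdots + [p_\ell]$. This class is $0$ exactly when $\widetilde{\boldsymbol{\beta}}(x) \in \mathcal B(G)$. Combining with Step 1, we get
\[
\widetilde{\boldsymbol{\beta}}^{-1}\big(\mathcal B(G)\big) = \varphi(H).
\]
In particular $\boldsymbol{\beta}$ maps $H$ into $\mathcal B(G)$.

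\emph{Step 3: (T1).} Since $\mathcal B(G)^\times$ is trivial, we need surjectivity and $\boldsymbol{\beta}^{-1}(1) = H^\times$. For surjectivity, take $S = g_1 \bdot \ldots \bdot g_\ell \in \mathcal B(G)$. By hypothesis each class contains a prime divisor, so we can choose $p_i \in P$ with $[p_i] = g_i$. Then $x = p_1 \cdots p_\ell$ has trivial class, so $x = \varphi(a)$ for some $a \in H$ by Step 2, and $\boldsymbol{\beta}(a) = S$. For the units, suppose $\boldsymbol{\beta}(a) = 1$. Then $\varphi(a) = 1 = \varphi(1)$, and the divisor property gives $a \mid 1$, so $a \in H^\times$.

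\emph{Step 4: (T2).} This is the step needing the most care, though it is still short. Let $u \in H$ with $\varphi(u) = p_1 \cdots p_\ell$, and suppose $\boldsymbol{\beta}(u) = B \bdot C$ with $B, C \in \mathcal B(G)$. Partition the index set $[1,\ell]$ so that the classes of the $p_i$ in one part form exactly $B$, and those in the other part form $C$. This gives $\varphi(u) = b c$ with $\widetilde{\boldsymbol{\beta}}(b) = B$ and $\widetilde{\boldsymbol{\beta}}(c) = C$. By Step 2 we have $b = \varphi(v)$ and $c = \varphi(w')$ for some $v, w' \in H$. Now $\varphi(u) = \varphi(v w')$, so the divisor property gives $u \mid v w'$ and $v w' \mid u$. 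Hence $u = \varepsilon v w'$ for some unit $\varepsilon \in H^\times$. Setting $w = \varepsilon w'$ yields $u = v w$, $\boldsymbol{\beta}(v) = B$ and $\boldsymbol{\beta}(w) = C$.

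\emph{Step 5: consequences.} Since $\boldsymbol{\beta}$ is a transfer homomorphism, Lemma~\ref{lem:trans} gives $\mathsf L(a) = \mathsf L\big(\boldsymbol{\beta}(a)\big)$ for all $a \in H$ and $\mathcal L(H) = \mathcal L\big(\mathcal B(G)\big)$.
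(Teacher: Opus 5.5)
Your proof is correct and is essentially the standard argument behind the cited result; the paper gives no proof of its own and only refers to the literature. The argument runs through saturation $\varphi(H)=\mathsf q\big(\varphi(H)\big)\cap\mathcal F(P)$, hence $\widetilde{\boldsymbol\beta}^{-1}\big(\mathcal B(G)\big)=\varphi(H)$, then (T1) and (T2) by lifting classes to prime divisors and absorbing a unit, exactly as you do. You rightly do not prove injectivity, which the word ``monomorphism'' in the statement suggests but which fails in general, for instance when $H$ has non-trivial units or two distinct prime divisors lie in the same class.
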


If $G$ is torsion, then $\mathcal B (G)$ is a BF-monoid with $\widehat{\mathcal B (G)} = \mathcal B_{G'} (G)$.
In particular, if $G$ is finite, then $\mathcal B (G)$ is a finitely generated C-monoid.
Moreover, $\mathsf D (G):= \mathsf D_{\{ 1_G\}} (G)$ is the usual Davenport constant of $G$, a classical combinatorial invariant whose study is a central topic in arithmetic combinatorics.
Consequently, because of its connections with various areas of mathematics, the study of combinatorial invariants of the monoid of product-one sequences has attracted wide attention in both invariant theory and factorization theory (see \cite{Oh-Zh20b,Zh21,Qu-Li22,Qu-Li-Tee22,Fa-Zh23,Av-Br-Ri23,Ri25,Qu-Li-Tee25,Oh-Ri-Zha-Zh26} for recent progress).

As another extremal cases, $\mathcal B_K (G) = \mathcal F (G)$ if $K = G$, and so it is clear that
\begin{equation} \label{eq:sub}~
  \mathcal B (G) = \mathcal B_{\{ 1_G\}} (G) \subseteq \mathcal B_K (G) \subseteq \mathcal B_G (G) = \mathcal F (G) \,.
\end{equation}

For any $S \in \mathcal B_K (G)$, any ordered product in $\pi (S) \cap K$ is called a {\it product-K equation} of $S$.
The next simple lemma shows that a product-$K$ equation of $S$ can have its order cyclically shifted.

\smallskip
\begin{lemma} \label{lem:shift}~
%Let $G$ be a group and $K \unlhd G$.
If $S = g_1 \bdot \ldots \bdot g_{\ell} \in \mathcal B_K (G)$ with $g_1 \cdots g_{\ell} \in K$, then $g_i \cdots g_{\ell} g_1 \cdots g_{i-1} \in K$ for every $i \in [1,\ell]$.
\end{lemma}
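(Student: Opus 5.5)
The plan is to reduce the claim to the conjugation invariance of the normal subgroup $K$. Put $a := g_1 \cdots g_{i-1}$ and $b := g_i \cdots g_{\ell}$, with the convention that $a = 1_G$ when $i = 1$. In that case the statement is just the hypothesis, so we may assume $i \ge 2$. By hypothesis $ab = g_1 \cdots g_{\ell} \in K$, and the element we must show lies in $K$ is
\[
  g_i \cdots g_{\ell} g_1 \cdots g_{i-1} = ba \,.
\]

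The key step is the identity $ba = a^{-1}(ab)a$, which holds by associativity in $G$. Since $K \unlhd G$, we have $a^{-1} K a = K$. As $ab \in K$, it follows that $ba = a^{-1}(ab)a \in K$, which is exactly the claim for the given $i$.

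Since $i \in [1,\ell]$ was arbitrary, this covers every cyclic shift. There is no real obstacle here. The only point needing care is the bookkeeping of the empty product in the case $i = 1$, which is handled by the convention $a = 1_G$. Note also that the argument uses only normality of $K$, not commutativity of $G$, so it applies in the non-abelian setting considered in this paper.
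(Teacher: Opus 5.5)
Your proof is correct and is essentially the paper's argument. The paper likewise writes the cyclically shifted product as the conjugate $(g_1\cdots g_{i-1})^{-1}\, k\, (g_1\cdots g_{i-1})$ of $k = g_1\cdots g_{\ell} \in K$ and concludes by normality of $K$; your explicit handling of the empty product at $i=1$ is a harmless extra.
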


\begin{proof}
Let $S = g_1 \bdot \ldots \bdot g_{\ell} \in \mathcal B_K (G)$ with $g_1 \cdots g_{\ell} = k$ for some $k \in K$.
For $i \in [1,\ell]$, by multiplying $g^{-1}_{i-1} \cdots g^{-1}_1$ and $g_1 \cdots g_{i-1}$ in left and right sides respectively, the normality of $K$ in $G$ ensures that
\[
  g_i \cdots g_{\ell} g_1 \cdots g_{i-1} = g^{-1}_{i-1} \cdots g^{-1}_1 k g_1 \cdots g_{i-1} = (g_1 \cdots g_{i-1})^{-1} k (g_1 \cdots g_{i-1}) \in K \,.  \qedhere
\]
\end{proof}

%Moreover, extending $\phi_K$ to the quotient group $\mathsf q \big( \mathcal F (G) \big)$, we have a unique homomorphism, denoted by $\mathsf q \left( \phi_K \right)$, from $\mathsf q \left( \mathcal F (G) \right)$ to $\mathsf q \left( \mathcal F \left( G/K \right) \right)$ satisfying $\mathsf q \left( \phi_K \right) \mid_{\mathcal F (G)} = \phi_K$, which is called the {\it quotient homomorphism} of $\phi_K$

% % % % % % % % % % % % % % % % % % % % % % % % % % % % % % % % % % % % % % % % % % % % % % % % % % % % % %
% % %             % % % % % % %               % % % % % % %               % % % % % % %               % % %
% % %             % % % % % % %               % % % % % % %               % % % % % % %               % % %
% % % % % % % % % % % % % % % % % % % % % % % % % % % % % % % % % % % % % % % % % % % % % % % % % % % % % %

\medskip
\section{Main results} \label{3}
\medskip

Let $G$ be a group and $K \unlhd G$.
For any subset $G_0 \subseteq G$, we denote by $\mathcal B_K (G_0)$ the monoid of product-$K$ sequences whose terms belong to $G_0$.
The following lemma shows that every divisor-closed submonoid of $\mathcal B_K (G)$ is of the form $\mathcal B_K (G_0)$ for some subset $G_0 \subseteq G$.

\smallskip
\begin{lemma} \label{lem:divisor}~
A submonoid $H \subseteq \mathcal B_K (G)$ is divisor-closed if and only if $H = \mathcal B_K (G_0)$ for some $G_0 \subseteq G$.
\end{lemma}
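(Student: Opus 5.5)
The plan is to prove the two implications separately. The backward direction is a support argument. For the forward direction I will choose $G_0$ as the union of the supports of elements of $H$, and then exhibit every $S \in \mathcal B_K(G_0)$ as a divisor, inside $\mathcal B_K(G)$, of some element of $H$. Throughout I use that $\mathcal B_K(G_0) = \mathcal B_K(G) \cap \mathcal F(G_0)$, and that divisibility in $\mathcal B_K(G)$ means $A = T \bdot W$ with both $T, W \in \mathcal B_K(G)$.

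\emph{($\Leftarrow$)} Suppose $H = \mathcal B_K(G_0)$, and let $A \in H$ and $T \in \mathcal B_K(G)$ with $T \mid A$ in $\mathcal B_K(G)$. Then $\supp(T) \subseteq \supp(A) \subseteq G_0$. Since $T$ is product-$K$, this gives $T \in \mathcal B_K(G_0) = H$, so $H$ is divisor-closed.

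\emph{($\Rightarrow$)} Let $H$ be divisor-closed and put $G_0 := \bigcup_{A \in H} \supp(A)$.
\begin{enumerate}
\item The inclusion $H \subseteq \mathcal B_K(G_0)$ is immediate.
\item For the converse, take $S \in \mathcal B_K(G_0)$; the case $S = 1$ is trivial. Write $S = g_1 \bdot \ldots \bdot g_\ell$, ordered so that $g_1 \cdots g_\ell \in K$.
\item For each $i \in [1,\ell]$, pick $A_i \in H$ with $g_i \in \supp(A_i)$, and set $A := A_1 \bdot \ldots \bdot A_\ell \in H$ and $R := A \bdot S^{[-1]}$.
\item The aim is to show $R \in \mathcal B_K(G)$. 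Then $A = S \bdot R$ is a factorization in $\mathcal B_K(G)$, so $S \mid A$ there, and divisor-closedness of $H$ gives $S \in H$.
\end{enumerate}

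The key step is showing $R$ is product-$K$, and I plan to do it blockwise. Fix a product-$K$ ordering of $A_i$. By Lemma~\ref{lem:shift}, the ordering may be cyclically shifted so that it begins with $g_i$, say $g_i h_{i,2} \cdots h_{i,m_i} \in K$. Hence the remaining sequence $R_i := A_i \bdot g_i^{[-1]}$ has an ordering with product $x_i := h_{i,2}\cdots h_{i,m_i} \in g_i^{-1}K$. (If $m_i = 1$, then $R_i$ is empty, $x_i = 1_G$, and $g_i \in K$, so this is consistent.)

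Now $R = R_\ell \bdot \ldots \bdot R_1$. Concatenating the chosen orderings of these blocks in the order $R_\ell, \ldots, R_1$ gives the product
\[
x_\ell \cdots x_1 \in g_\ell^{-1}K \cdots g_1^{-1}K = (g_1\cdots g_\ell)^{-1}K = K,
\]
where normality of $K$ lets the cosets multiply in $G/K$. So $R \in \mathcal B_K(G)$, which completes the argument.

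The main obstacle is precisely this point: a naive removal of $S$ from $A$ need not leave a product-$K$ sequence. It is overcome by combining Lemma~\ref{lem:shift} with reversing the order of the blocks, so that the inverse cosets cancel against the product-$K$ ordering of $S$.
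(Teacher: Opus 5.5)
Your proposal is correct and follows essentially the same route as the paper. You take $G_0$ as the union of supports, choose $A_i\in H$ containing $g_i$, use Lemma~\ref{lem:shift} to obtain an ordering of $A_i\bdot g_i^{[-1]}$ with product in $g_i^{-1}K$, and concatenate these blocks in reverse order so that the cofactor $A\bdot S^{[-1]}$ is product-$K$, exactly as the paper does (its permutation $\sigma$ plays the role of your chosen ordering of $S$).
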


\begin{proof}
For any $G_0 \subseteq G$, $\mathcal B_K (G_0) \subseteq \mathcal B_K (G)$ is clearly a divisor-closed submonoid.
For the converse, let $H \subseteq \mathcal B_K (G)$ be a divisor-closed submonoid.
We set $G_0 := \bigcup_{T \in H} \supp (T) \subseteq G$, and then $H \subseteq \mathcal B_K (G_0)$.
Now let $S = g_1 \bdot \ldots \bdot g_{\ell} \in \mathcal B_K (G_0)$.
For each $i \in [1,\ell]$, we can take $T_i \in H$ such that $g_i \in \supp (T_i)$, and by Lemma~\ref{lem:shift}, we infer that $\pi \big( T_i \bdot g^{[-1]}_i \big) \cap g^{-1}_i K \neq \emptyset$.
If $\sigma$ is a permutation on $[1,\ell]$ such that $g_{\sigma (1)} \cdots g_{\sigma (\ell)} \in K$, then $g^{-1}_{\sigma (\ell)} \cdots g^{-1}_{\sigma (1)} \in K$, and since $K$ is normal in $G$, we have that 
\[
  \emptyset \neq \pi \left( T_{\sigma (\ell)} \bdot g^{[-1]}_{\sigma (\ell)} \right) \cdots \pi \left( T_{\sigma (1)} \bdot g^{[-1]}_{\sigma (1)} \right) \cap \big( g^{-1}_{\sigma (\ell)} K \big) \cdots \big( g^{-1}_{\sigma (1)} K \big) \subseteq \pi \left( T_1 \bdot \ldots \bdot T_{\ell} \bdot S^{[-1]} \right) \cap K \,.
\]
This means that $S \mid T_1 \bdot \ldots \bdot T_{\ell}$ in $\mathcal B_K (G)$, and since $H$ is divisor-closed in $\mathcal B_K (G)$, we obtain that $S \in H$.
\end{proof}

\smallskip
\begin{proposition} \label{pro:local}~
Let $G$ be a group and $K \unlhd G$.
\begin{enumerate}
\item s-$\spec \big( \mathcal B_K (G) \big) = \{ \mathfrak p_{\Gamma} \mid \Gamma \subseteq G \}$, where
	\[
	  \mathfrak p_{\Gamma} = \{ T \in \mathcal B_K (G) \mid \mathsf v_g (T) \ge 1 \, \mbox{ for some } \, g \in \Gamma \} \,\, \mbox{ for } \,\, \Gamma \subseteq G \,.
	\]
	In particular, $\mathfrak X \big( \mathcal B_K (G) \big) = \{ \mathfrak p_g \mid g \in G \}$.

\smallskip
\item If $G/K$ is a torsion group, then 
	\[
	  \widetilde{\mathcal B_K (G)} = \widehat{\mathcal B_K (G)} = \{ S \in \mathcal F (G) \mid \pi (S) \subseteq KG' \} = \bigcap_{\mathfrak p \in \mathfrak X (\mathcal B_K (G))} \mathcal B_K (G)_{\mathfrak p} \,.
	\]
\end{enumerate}
\end{proposition}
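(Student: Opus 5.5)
For part (1), I will begin by describing $\mathfrak p_\Gamma$ as a complement. The complement $\mathcal B_K(G)\setminus \mathfrak p_\Gamma$ equals $\mathcal B_K(G\setminus\Gamma)$. By Lemma~\ref{lem:divisor} this is a divisor-closed submonoid. Now use the standard correspondence: a subset $\mathfrak p$ of a monoid $H$ is a prime $s$-ideal (including $\emptyset$) if and only if $H\setminus\mathfrak p$ is a divisor-closed submonoid. So every divisor-closed submonoid is some $\mathcal B_K(G_0)$, and then $\mathfrak p=\mathfrak p_{G\setminus G_0}$, which gives s-$\spec(\mathcal B_K(G))=\{\mathfrak p_\Gamma\}$. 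Distinct $\Gamma$ can give the same ideal only in degenerate ways, which is harmless.

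For the minimal non-empty primes, I first show that $\mathfrak p_\Gamma\neq\emptyset$ whenever $\Gamma\ne\emptyset$. This holds because every $g$ lies in some product-$K$ sequence, for instance $g\bdot g^{-1}$. Hence $\mathfrak p_g:=\mathfrak p_{\{g\}}$ is non-empty, and $\mathfrak p_\Gamma=\bigcup_{g\in\Gamma}\mathfrak p_g$. Next I need $\mathfrak p_g\subseteq\mathfrak p_h$ only when $g=h$. This follows because $g\bdot g^{-1}\in\mathfrak p_g$, and this sequence lies in $\mathfrak p_h$ only if $h\in\{g,g^{-1}\}$. In the case $h=g^{-1}\ne g$, I will look for a sequence containing $g$ but not $g^{-1}$. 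The sequence $g\bdot g^{-1}$ does not work, so I take the element $g^{[n]}$ when $\ord_K(g)=n<\infty$. In general I take $g\bdot g\bdot g^{-2}$ if $g^{-2}\neq g^{-1}$, which holds since $g\neq 1$. If $g^{-2}=g$, then $g^3=1$ and $g^{[3]}$ works. This shows the $\mathfrak p_g$ are pairwise incomparable, so $\mathfrak X=\{\mathfrak p_g\}$.

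For part (2), I set $M=\{S\in\mathcal F(G)\mid \pi(S)\subseteq KG'\}$. Note that $\pi(S)$ lies in a single $KG'$-coset, so $S\in M$ if and only if some product lies in $KG'$. First, $\mathsf q(\mathcal B_K(G))=\mathsf q(\mathcal F(G))$, since $g=(g\bdot g^{-1}\bdot g)(g\bdot g^{-1})^{-1}$ and $g\bdot g^{-1}\bdot g$ is product-$K$ when $g^2\in K$. It is cleaner to use $g^{[n+1]}\bdot (g^{[n]})^{-1}$ with $n=\ord_K(g)$, which is finite because $G/K$ is torsion.

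Inclusion $\widehat{\mathcal B_K(G)}\subseteq M$: the map $S\mapsto$ (the product coset in $G/KG'$) extends to a homomorphism $\mathsf q(\mathcal F(G))\to G/KG'$ that is trivial on $\mathcal B_K(G)$. If $cx^n\in\mathcal B_K(G)$ for all $n$, then $x$ lies in the kernel. I also need $x\in\mathcal F(G)$. For this I use the localizations: $\mathcal B_K(G)_{\mathfrak p_g}$ allows inversion of all sequences avoiding $g$, so the $g$-valuation stays bounded below. Concretely, if $\mathsf v_g(x)<0$ then $\mathsf v_g(cx^n)<0$ for large $n$, a contradiction.

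Inclusion $M\subseteq\widetilde{\mathcal B_K(G)}$: for $S\in M$ with $g_1\cdots g_\ell\in KG'$, I take $N$ to be the exponent of the abelian torsion image of $\langle \supp S\rangle$ in $G/K$, or simply $N=\lcm$ of the $\ord_K(g_i)$. I then show $S^{[N]}\in\mathcal B_K(G)$ by checking $\phi_K(S^{[N]})\in\mathcal B(G/K)$ via \eqref{eq:phi}. Each $\phi_K(g_i)^{[N]}$ is already product-one, since we can simply group equal terms. Hence $\widetilde{\mathcal B_K(G)}\supseteq\mathcal F(G)\cap\mathsf q=\mathcal F(G)$. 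This would give all of $\mathcal F(G)$, which is suspicious, so I must be careful: $\mathsf q(\mathcal B_K(G))$ is in fact all of $\mathsf q(\mathcal F(G))$, and the real constraint comes from the complete integral closure argument above. The main obstacle is reconciling these: I expect that the root-closure argument genuinely yields $\mathcal F(G)$, and that the kernel argument is wrong, because $\phi$ to $G/KG'$ is not defined on the quotient group independently of the ordering of terms. I will therefore recheck whether products of $\mathcal B_K$ elements are really constrained mod $KG'$. They are: concatenating sequences multiplies cosets. But then $g^{[n]}$ has coset $g^nKG'$ trivial only if $g^n\in KG'$, which is indeed the case. So the obstacle is precisely this verification of $\mathsf q$ and the coset homomorphism.

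Finally, I will identify $\bigcap_g\mathcal B_K(G)_{\mathfrak p_g}$ with the set of $x\in\mathsf q$ whose valuations are all non-negative and which lie in the kernel, and then compare it with $M$ using the descriptions above.
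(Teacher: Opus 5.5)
Your Part~1 is correct and follows the paper's argument (Lemma~\ref{lem:divisor} plus pairwise incomparability of the $\mathfrak p_g$). You are even more careful than the paper in checking $\mathfrak p_\Gamma\neq\emptyset$ for $\Gamma\neq\emptyset$ and in treating the case $h=g^{-1}$.

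Part~2 has a genuine gap, and it begins with a false claim: in general $\mathsf q(\mathcal B_K(G))\neq\mathsf q(\mathcal F(G))$. The sequence $g\bdot g^{-1}\bdot g$ has product $g$ in every order. The sequence $g^{[n+1]}$ with $n=\ord_K(g)$ has product $g^{n+1}\in gK$. So neither is product-$K$ unless $g\in K$. Your coset map $\mathsf q(\mathcal F(G))\to G/KG'$ is perfectly well defined: the target is abelian, so $g\mapsto gKG'$ extends uniquely from the basis, and no ordering issue arises. This map is trivial on $\mathcal B_K(G)$, so $\mathsf q(\mathcal B_K(G))$ lies in its kernel and is a proper subgroup whenever $KG'\neq G$. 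The contradiction you ran into therefore resolves in favour of the kernel argument. Your inclusion $\widehat{\mathcal B_K(G)}\subseteq M$ (kernel plus the valuation bound) is fine. Your $S^{[N]}$ argument, however, only gives $\mathsf q(\mathcal B_K(G))\cap\mathcal F(G)\subseteq\widetilde{\mathcal B_K(G)}$, not $\mathcal F(G)\subseteq\widetilde{\mathcal B_K(G)}$.

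What is missing is the inclusion $M\subseteq\bigcap_{z\in G}\mathcal B_K(G)_{\mathfrak p_z}$, which in particular gives $M\subseteq\mathsf q(\mathcal B_K(G))$. Your last sentence only restates this goal. You need, for every $S\in M$ and every $z\in G$, some $T\in\mathcal B_K(G)$ with $z\notin\supp(T)$ and $S\bdot T\in\mathcal B_K(G)$. This requires an explicit construction, and it is the content of step (b) in the paper.

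First reduce to the one-term sequence $\alpha\in\pi(S)\subseteq KG'$: if $\alpha\bdot T\in\mathcal B_K(G)$, then $S\bdot T\in\mathcal B_K(G)$. For $\alpha=kx$ with $x=ghg^{-1}h^{-1}\neq 1_G$, check that $(kx)\bdot T\in\mathcal B_K(G)$ for each of
\[
h^{[\ord_K(h)]}\bdot g\bdot g^{-1},\quad (hg)^{[\ord_K(hg)]}\bdot g\bdot g^{-1},\quad g^{[\ord_K(g)]}\bdot h\bdot h^{-1},\quad (hg)^{[\ord_K(hg)]}\bdot h\bdot h^{-1}.
\]
These four elements of $\mathcal B_K(G)$ have supports with empty common intersection, so one of them avoids any prescribed $z$.

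For products of several commutators, use the following observation (the paper glosses over this). For fixed $z$, the set of $\alpha\in G$ admitting such a $T$ contains $K$ and is closed under multiplication. Indeed, if $\alpha t_1,\beta t_2\in K$ with $t_i\in\pi(T_i)$, then $\alpha\beta t_2t_1=(\alpha t_1)\,t_1^{-1}(\beta t_2)t_1\in K$, using Lemma~\ref{lem:shift} and normality of $K$.

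With this step in place, your own ingredients close the chain
\[
M\subseteq\bigcap_{z\in G}\mathcal B_K(G)_{\mathfrak p_z}\subseteq\mathsf q\big(\mathcal B_K(G)\big)\cap\mathcal F(G)\subseteq\widetilde{\mathcal B_K(G)}\subseteq\widehat{\mathcal B_K(G)}\subseteq M.
\]
The second inclusion holds because every element of $\mathcal B_K(G)_{\mathfrak p_z}$ has non-negative $z$-valuation.
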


\begin{proof}
1. For any subset $\Gamma \subseteq G$, $\mathfrak p_{\Gamma}$ is clearly a prime $s$-ideal of $\mathcal B_K (G)$.
For the reverse inclusion, we pick $\mathfrak P \in s$-$\spec \big( \mathcal B_K (G) \big)$.
Then, $\mathcal B_K (G) \setminus \mathfrak P$ is a divisor-closed submonoid of $\mathcal B_K (G)$.
By Lemma~\ref{lem:divisor}, $\mathcal B_K (G) \setminus \mathfrak P = \mathcal B_K (G_0)$ for some $G_0 \subseteq G$, which means that the ideal $\mathfrak P$ consists of all $T \in \mathcal B_K (G)$ such that $\mathsf v_g (T) \ge 1$ for some $g \in G \setminus G_0$, whence $\mathfrak P = \mathfrak p_{(G \setminus G_0)}$.

For ``in particular'' statement, we observe that $\Gamma_1 \subseteq \Gamma_2$ if and only if $\mathfrak p_{\Gamma_1} \subseteq \mathfrak p_{\Gamma_2}$.
Thus, it follows that $\mathfrak X \big( \mathcal B_K (G) \big) \subseteq \{ \mathfrak p_g \mid g \in G \}$.
For the reverse inclusion, it suffices to show that $\mathfrak p_g$ and $\mathfrak p_h$ are not comparable for any distinct $g, h \in G$.
Let $g \in G$.
If $\ord (g) = 2$, then in view of (\ref{eq:sub}), $g = g^{-1}$, and $g^{[2]} \in \mathfrak p_g$, but $g^{[2]} \notin \mathfrak p_h$ for every $h \in G \setminus \{ g \}$.
If $\ord (g) \ge 3$, then in view of (\ref{eq:sub}), $g \neq g^{-1}$, and $g^{[2]} \bdot g^{-2} \in \mathfrak p_g$, but $g^{[2]} \bdot g^{-2} \notin \mathfrak p_h$ for every $h \in G \setminus \{ g \}$.
Therefore, $\mathfrak p_g \nsubseteq \mathfrak p_h$ for any distinct $g, h \in G$.

\smallskip
2. We show the following inclusions:
\[
  \widehat{\mathcal B_K (G)} \, \overset{(a)}{\subseteq} \, \{ S \in \mathcal F (G) \mid \pi (S) \subseteq KG' \}  \, \overset{(b)}{\subseteq} \,  \small{\bigcap}_{\mathfrak p \in \mathfrak X (\mathcal B_K (G))} \mathcal B_K (G)_{\mathfrak p} \, \overset{(c)}{\subseteq} \, \widetilde{\mathcal B_K (G)} \,.
\]

For (a), since $\mathcal B_K (G) \subseteq \mathcal F (G)$ and $\mathcal F (G)$ is factorial, it follows that $\widehat{\mathcal B_K (G)} \subseteq \mathcal F (G)$.
Thus, for any $\frac{S_1}{S_2} \in \widehat{\mathcal B_K (G)}$ with $S_1, S_2 \in \mathcal B_K (G)$, we can write $S_1 = S_2 \bdot T$ for some $T \in \mathcal F (G)$.
Since $S_1, S_2 \in \mathcal B_K (G)$, there exist $k_1, k_2 \in K$ such that $k_1 \in \pi (S_1) \cap K$ and $k_2 \in \pi (S_2) \cap K$.
Since $G/G'$ is abelian, it follows that $\pi (S_1) \subseteq k_1 G'$ and $\pi (S_2) \subseteq k_2 G'$.
Then, $S_1 = S_2 \bdot T$ ensures that $\pi (S_2) \pi (T) \subseteq \pi (S_1) \subseteq k_1 G'$, so that $\pi (T) \subseteq (k^{-1}_2 k_1)G' \subseteq KG'$.
Thus, $\widehat{\mathcal B_K (G)} \subseteq \{ S \in \mathcal F (G) \mid \pi (S) \subseteq KG' \}$.

For (b), it suffices to show that $kx \in \bigcap_{\mathfrak p \in \mathfrak X (\mathcal B_K (G))} \mathcal B_K (G)_{\mathfrak p}$ for all $k \in K$ and $x \in G'$.
Indeed, if this holds, then for any $S \in \mathcal F (G)$ with $\alpha \in \pi (S) \subseteq KG'$ and for $g \in G$, there exists $T \in \mathcal B_K (G) \setminus \mathfrak p_g$ such that $\alpha \bdot T \in \mathcal B_K (G)$.
This ensures that $S \bdot T \in \mathcal B_K (G)$, so that $S = \frac{S \bdot T}{T} \in \mathcal B_K (G)_{\mathfrak p_g}$.

Now let $k \in K$ and $x = ghg^{-1}h^{-1}$ with $g, h \in G$.
If $x = 1_G$, then $kx \in \mathcal B_K (G)_{\mathfrak p}$ for every $\mathfrak p \in \mathfrak X \big( \mathcal B_K (G) \big)$.
Suppose that $x \neq 1_G$.
Since $G/K$ is a torsion group, we obtain that 
\[
  (kx) \bdot h^{[\ord_K (h)]} \bdot g \bdot g^{-1} \,\, \mbox{ and } \,\, (kx) \bdot (hg)^{[\ord_K (hg)]} \bdot g \bdot g^{-1} \,\, \in \,\, \mathcal B_K (G) \,.
\]
Indeed, since $h^{\ord_K (h)} \in K$, it follows that $h^{\ord_K (h) - 1} = k_1h^{-1}$ for some $k_1 \in K$, and so the normality of $K$ ensures that $(h^{\ord_K (h)-1}) g^{-1} (kx) h g = (k_1h^{-1})g^{-1}k(gh) = (gh)^{-1}(k_2 k)(gh) \in K$ for some $k_2 \in K$.
Also, it follows by the same argument that $g \big( (hg)^{\ord_K (hg)-1} \big)g^{-1} (kx) (hg) \in K$.
Moreover, since $G/K$ is a torsion group, we also obtain that
\[
  (kx) \bdot g^{[\ord_K (g)]} \bdot h \bdot h^{-1} \,\, \mbox{ and } \,\, (kx) \bdot (hg)^{[\ord_K (hg)]} \bdot h \bdot h^{-1} \,\, \in \,\, \mathcal B_K (G) \,.
\]
Indeed, since $(hg)^{\ord_K (hg)} \in K$, it follows that $(hg)^{\ord_K (hg) - 1} = k_1(hg)^{-1}$ for some $k_1 \in K$, and thus by the normality of $K$, we infer that $h^{-1} \big( (hg)^{\ord_K (hg)-1} \big) h (kx) (hg) = h^{-1} \big(k_1 (hg)^{-1} \big) h k (gh) = (gh)^{-1}(k_2 k) (gh) \in K$ for some $k_2$.
Also, it follows by the same argument that $h^{-1} (g^{\ord_K (g) - 1}) (kx) h g \in K$.
Therefore, we obtain that
\[
  kx = \frac{(kx) \bdot T_1}{T_1} = \frac{(kx) \bdot T_2}{T_2} \, \in \, \mathsf q \big( \mathcal B_K (G) \big) \,,
\]
where $T_1 \in \{ h^{[\ord_K (h)]} \bdot g \bdot g^{-1}, \, (hg)^{[\ord_K (hg)]} \bdot g \bdot g^{-1} \}$ and $T_2 \in \{ g^{[\ord_K (g)]} \bdot h \bdot h^{-1}, \, (hg)^{[\ord_K (hg)]} \bdot h \bdot h^{-1} \}$.
Since $x \neq 1_G$, it follows that $\{h, g, g^{-1} \} \cap \{ (hg), g, g^{-1} \} = \{ g, g^{-1} \}$ and $\{ g, h, h^{-1} \} \cap \{ (hg), h, h^{-1} \} = \{ h, h^{-1} \}$, whence
\[
  A := \{h, g, g^{-1} \} \cap \{ (hg), g, g^{-1} \}  \cap \{ g, h, h^{-1} \} \cap \{ (hg), h, h^{-1} \} = \emptyset \,.
\]
Thus, $kx \in \mathcal B_K (G)_{\mathfrak p_z}$ for all $z \in G \setminus A = G$, equivalently by item 1, $kx \in \mathcal B_K (G)_{\mathfrak p}$ for all $\mathfrak p \in \mathfrak X \big( \mathcal B_K (G) \big)$.

For (c), let $\frac{S_1}{S_2} \in \bigcap_{\mathfrak p \in \mathfrak X (\mathcal B_K (G))} \mathcal B_K (G)_{\mathfrak p}$.
If $\supp (S_1) \cap \supp (S_2) \neq \emptyset$, then by deleting common terms, we may assume that $S_1, S_2 \in \mathcal F (G)$ with $\supp (S_1) \cap \supp (S_2) = \emptyset$.
Assume to the contrary that $S_2$ is non-trivial.
Then, there exists at least $g \in G$ such that $g \in \supp (S_2)$.
Since $\frac{S_1}{S_2} \in \mathcal B_K (G)_{\mathfrak p}$ for all $\mathfrak p \in \mathfrak X \big( \mathcal B_K (G) \big)$, it follows by Item 1 that there exist $T_g \in \mathcal B_K (G)$ and $T'_g \in \mathcal B_K (G) \setminus \mathfrak p_g$ such that $\frac{S_1}{S_2} = \frac{T_g}{T'_g}$.
Since $\supp (S_1) \cap \supp (S_2) = \emptyset$ and $g \in \supp (S_2)$, $S_1 \bdot T'_g = T_g \bdot S_2$ ensures that $g \in \supp (T'_g)$, which means that $T'_g \in \mathfrak p_g$, a contradiction.
Hence, $S_2$ must be a trivial sequence, ensuring that $S_1 = \frac{S_1}{S_2} \in \mathcal F (G) \cap \mathsf q \big( \mathcal B_K (G) \big)$.
Now let $S_1 = g_1 \bdot \ldots \bdot g_{\ell}$.
Since $G/K$ is a torsion group, we may define $n := \lcm \{ \ord_K (g_i) \mid i \in [1,\ell] \}$.
Then, $S^{[n]}_1 \in \mathcal B_K (G)$, and thus we infer that $\frac{S_1}{S_2} = S_1 \in \widetilde{\mathcal B_K (G)}$.
\end{proof}

\smallskip
For a monoid $H$, a map $\lambda \colon H \to \mathbb N_{0}$ is called a {\it length function} for $H$ if $\lambda (a) \lneq \lambda (b)$ for all $a, b \in H$ with $a \mid b$ and $a \neq ub$ for every $u \in H^{\times}$.
It is straightforward to verify that every monoid possessing a length function satisfies the ascending chain condition for principal ideal (ACCP for short), and every monoid satisfying ACCP is atomic.

The following theorem is our main result on algebraic properties of $\mathcal B_K (G)$.
If $G$ is abelian, then Item 2.(a) holds trivially, and all the remaining statements in the theorem coincide with those in the classical abelian setting (see \cite{HK92}).
If $G$ is not necessarily abelian but $K$ is trivial, then Item 2.(a) is equivalent to the assumption that $G$ is abelian.
Consequently, all statements in the theorem reduce to their counterparts in the non-abelian setting developed in recent years (see \cite{Cz-Do-Ge16,Oh19,Oh20,Ge-Gr-Oh-Zh22,Fa-Zh23}).

\smallskip
\begin{theorem} \label{thm:struc}~
Let $G$ be a group and $K \unlhd G$ be such that $G/K$ is a torsion group.
\begin{enumerate}
\item The monoid $\mathcal B_K (G)$ is a BF-monoid, and $\widehat{\mathcal B_K (G)}$ is a Krull monoid.
Moreover, except in the case where $|G| = 2$ and $|K| = 1$, the embedding $\widehat{\mathcal B_K (G)} \hookrightarrow \mathcal F (G)$ is a divisor theory with class group $\mathcal F (G) / \mathcal B_K (G)$, which is isomorphic to $G/KG'$, and every class contains precisely $|KG'|$ prime divisors.
In particular, if $G/K$ is finite, then the monoid $\mathcal B_K (G)$ is a finitely generated C-monoid, $\mathsf D_K (G)$ is finite, and $\widehat{\mathcal B_K (G)}$ is a finitely generated Krull monoid.

\smallskip
\item The following statements are equivalent:
	\begin{enumerate}
	\smallskip
	\item[(a)] $G' \subseteq K$.
	
	\smallskip
	\item[(b)] $\mathcal B_K (G)$ is a Krull monoid.
	
	\smallskip
	\item[(c)] $\mathcal B_K (G)$ is completely integrally closed.
	
	\smallskip
	\item[(d)] $\mathcal B_K (G)$ is root-closed.
	
	\smallskip
	\item[(e)] $\mathcal B_K (G)$ is a transfer Krull monoid.
	
	\smallskip
	\item[(f)] $\mathcal B_K (G)$ is a weakly Krull monoid.
	
	\smallskip
	\item[(g)] The embedding $\mathcal B_K (G) \hookrightarrow \mathcal F (G)$ is a divisor theory.
	
	\smallskip
	\item[(h)] The embedding $\mathcal B_K (G) \hookrightarrow \mathcal F (G)$ is a divisor homomorphism.
	
	\smallskip
	\item[(i)] The class semigroup of $\mathcal B_K (G)$ in $\mathcal F (G)$ is a group.
	\end{enumerate}
	\smallskip
	If this is the case, then the class semigroup of $\mathcal B_K (G)$ in $\mathcal F (G)$ is isomorphic to $G/K$.
\end{enumerate}
\end{theorem}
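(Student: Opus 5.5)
For Item 1, I will use the length function $\lambda(S) = |S|$. Since $\mathcal B_K(G)^\times = \{1\}$ (a non-trivial sequence is not invertible in $\mathcal F(G)$), $\lambda$ is strictly increasing along proper divisibility. Hence $\mathcal B_K(G)$ satisfies ACCP and is atomic, and every factorization of $S$ has length at most $|S|$, so $\mathcal B_K(G)$ is a BF-monoid.

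Write $H := \widehat{\mathcal B_K (G)}$. By Proposition~\ref{pro:local}.2, $H = \{ S \in \mathcal F (G) \mid \pi (S) \subseteq KG' \}$. This set is the preimage of the trivial product-one sequences under $\mathcal F(G) \to \mathcal F(G/KG')$. Equivalently, $S \in H$ exactly when the image of $S$ in the abelian group $G/KG'$ has sum zero.

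\begin{itemize}
\item \emph{$H$ is Krull.} If $S, T \in H$ and $S \mid T$ in $\mathcal F(G)$, then $T\bdot S^{[-1]}$ has product in $KG'$, so the inclusion $H \hookrightarrow \mathcal F(G)$ is a divisor homomorphism. Condition (d) then gives that $H$ is Krull.
\item \emph{Divisor theory.} For the gcd condition, I need, for each $g \in G$, elements of $H$ whose gcd is $g$. I would take $g \bdot g^{-1}$ and $g \bdot h \bdot (gh)^{-1}$ for a suitable $h$. The gcd of these is $g$ unless $G$ is too small; the degenerate case is $|G| = 2$ with $|K| = 1$, where $g\bdot g$ is the only option.
\item \emph{Class group.} The class group $\mathsf q(\mathcal F(G))/\mathsf q(H)$ is identified with $G/KG'$ via the product map, and the primes in the class of $gKG'$ are the elements of the coset $gKG'$, giving $|KG'|$ prime divisors per class.
\item \emph{Finite $G/K$.} Every $g \in G$ satisfies $g^{[\ord_K g]} \in \mathcal B_K(G)$. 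When $G/K$ is finite but $G$ is infinite, I would reduce to the finite group $G/K$ via $\phi_K$ and (\ref{eq:phi}). Then $\mathcal B_K(G)$ is finitely generated modulo the behaviour of atoms, and Lemma~\ref{lem:C} yields the C-monoid property and finiteness of $\mathsf D_K(G) = \mathsf D(G/K)$. This step is delicate if $G$ is infinite, since $\mathcal F(G)$ is then not finitely generated. I would first try to read "finitely generated" relative to $G/K$, or restrict to the case where $G$ is finite.
\end{itemize}

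For Item 2, I will prove (a)$\Rightarrow$(h)$\Rightarrow$(g)$\Rightarrow$(b)$\Rightarrow$(e), (b)$\Rightarrow$(f), and (b)$\Rightarrow$(c)$\Rightarrow$(d)$\Rightarrow$(a), and handle (i) separately.

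\begin{itemize}
\item \emph{(a)$\Rightarrow$(h).} If $G' \subseteq K$, then $\mathcal B_K(G) = H$ by Proposition~\ref{pro:local}.2, and $H \hookrightarrow \mathcal F(G)$ is a divisor homomorphism as shown above.
\item \emph{(h)$\Rightarrow$(g).} This follows from the gcd argument of Item 1.
\item \emph{(b)$\Rightarrow$(c).} Krull monoids are completely integrally closed.
\item \emph{(c)$\Rightarrow$(d).} This is immediate from $\widetilde H \subseteq \widehat H$.
\item \emph{(d)$\Rightarrow$(a).} Suppose $x = ghg^{-1}h^{-1} \notin K$. The one-term sequence $x$ lies in $\widetilde{\mathcal B_K(G)}$ by Proposition~\ref{pro:local}.2, but $x \notin \mathcal B_K(G)$, so $\mathcal B_K(G)$ is not root-closed.
\item \emph{(e)$\Rightarrow$(a) and (f)$\Rightarrow$(a).} For (f), weakly Krull condition (i) says $\mathcal B_K(G) = \bigcap_{\mathfrak p} \mathcal B_K(G)_{\mathfrak p}$, and this intersection contains the element $x$ by Proposition~\ref{pro:local}.2, so again $x \in K$. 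For (e), I would use a length argument. If $x \notin K$, then the sequences $x \bdot h^{[n]} \bdot g \bdot g^{-1}$ and $x \bdot (hg)^{[m]} \bdot g \bdot g^{-1}$ built in Proposition~\ref{pro:local} should produce factorizations that a transfer homomorphism to a Krull monoid cannot reproduce. I expect this to be the main obstacle. My first attempt is to exhibit an element with $\{2,3\}$-type lengths conflicting with (T2): concretely, find an atom $A$ and elements $U, V$ with $A \bdot U = V \bdot W$, where the divisibility pattern is impossible in a monoid whose image has a divisor homomorphism into a free monoid.
\end{itemize}

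For (i), two sequences are equivalent in the class semigroup if and only if they have the same set of complements. Under (a), the class of $S$ is determined by $\pi(S)K$, which is a single coset of $K$, so the class semigroup is $G/K$ and in particular a group. Conversely, if the class semigroup is a group, then $x$ (from a commutator outside $K$) and a suitable element of $K$ represent classes for which cancellation forces the embedding to be a divisor homomorphism, giving (h).
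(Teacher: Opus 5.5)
There is a genuine gap: you never prove (e)$\Rightarrow$(a). In your scheme (e) is only reached from (b), so ``transfer Krull'' is never shown to be equivalent to the other conditions. What you describe is a search strategy rather than an argument.

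A length-based obstruction cannot do the job on its own. Transfer homomorphisms preserve sets of lengths, and $\mathcal L(\mathcal B_K(G)) = \mathcal L(\mathcal B(G/K))$ by Theorem~\ref{thm:transfer}. So you would additionally need to know that this system is not that of any Krull monoid.

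The missing idea is that transfer homomorphisms preserve atoms (Lemma~\ref{lem:trans}.1), while Krull monoids are root closed. The ``divisibility pattern'' to look for is a pair of distinct atoms $S \neq A$ with $S^{[n]} = A^{[n]} \bdot T$ for some nontrivial $T$. If $ghg^{-1}h^{-1} \notin K$, the paper takes:
\begin{itemize}
\item $S = h \bdot h^{-1} \bdot g \bdot (hg^{-1}h^{-1})$, which is product-one and an atom, since no term lies in $K$ and each of the three ways to split it into two pairs would force $ghg^{-1}h^{-1} \in K$;
\item $A = h \bdot h^{-1}$;
\item $n = \ord_K(ghg^{-1}h^{-1})$ and $T = (g \bdot hg^{-1}h^{-1})^{[n]} \in \mathcal B_K(G)$.
\end{itemize}
For a transfer homomorphism $\varphi$ into a Krull monoid $M$, one gets $(\varphi(S)\varphi(A)^{-1})^n = \varphi(T) \in M$. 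Root-closedness gives $\varphi(A) \mid \varphi(S)$. Both are atoms, hence associated, so $\varphi(T) \in M^\times$. This contradicts (T1) because $T \neq 1$.

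\textbf{Further points.}
\begin{itemize}
\item Your (i)$\Rightarrow$(h) (``cancellation forces the embedding to be a divisor homomorphism'') is also not an argument. The paper obtains (h)$\Leftrightarrow$(i) from the cofinality of $\mathcal B_K(G) \hookrightarrow \mathcal F(G)$ together with Proposition 2.8.7 of Geroldinger--Halter-Koch's monograph.
\item Your doubt about finite generation is justified. If $G$ is infinite and $G/K$ is finite, then $K$ is infinite, and every one-term sequence $k \in K$ is an atom of both $\mathcal B_K(G)$ and $\widehat{\mathcal B_K(G)}$. So neither monoid is finitely generated, and the finite-generation claims in Item~1 are false as stated.
\item What the paper's argument actually proves is $\mathsf D_K(G) \le |G/K|$: in an atom $g_1 \bdot \ldots \bdot g_\ell$ the prefix products $\phi_K(g_1)\cdots\phi_K(g_i)$ are pairwise distinct. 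This bound gives finite generation, and hence access to Lemma~\ref{lem:C}, only when $G$ is finite. You should state this bound explicitly rather than ``finitely generated modulo the behaviour of atoms'', and restrict the finite-generation claims to finite $G$.
\item The exceptional case you found in Item~1 also breaks your (h)$\Rightarrow$(g). For $G \cong C_2$ and $K = \{1_G\}$, (a) holds but the embedding is not a divisor theory, so (g) is not equivalent to the other conditions there. The paper's (b)$\Rightarrow$(g), which appeals to Item~1, has the same blind spot.
\item On the positive side, your direct (d)$\Rightarrow$(a) is correct and simpler than the paper's route. It uses that the one-term sequence $ghg^{-1}h^{-1}$ lies in $\widetilde{\mathcal B_K(G)} \setminus \mathcal B_K(G)$. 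Your (f)$\Rightarrow$(a) is likewise correct. In the paper, the only implication leading back to (a) is (e)$\Rightarrow$(a).
\end{itemize}
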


\begin{proof}
1. Since $\mathcal F (G)$ is factorial, it has a length function $\lambda$.
Clearly, both $\mathcal F (G)$ and $\mathcal B_K (G)$ are reduced, which means that they have only trivial element as units, and so the units of $\mathcal B_K (G)$ are precisely the elements of $\mathcal B_K (G)$ that are units in $\mathcal F (G)$.
Thus, the restriction $\lambda \mid_{\mathcal B_K (G)}$ to $\mathcal B_K (G)$ is also a length function for $\mathcal B_K (G)$, and thus $\mathcal B_K (G)$ is an atomic, even a BF-monoid (see \cite[Proposition 1.1.7]{Ge-Gr-Zh26}).

Since $G/K$ is a torsion group, we observe that $g^{[\ord_K (g)]} \in \mathcal B_K (G)$ for every $g \in G$.
Thus, for every $S = g_1 \bdot \ldots \bdot g_{\ell} \in \mathcal F (G)$, if we set $n_S := \lcm \{ \ord_K (g_i) \mid i \in [1,\ell] \}$, then $S^{[n_S]} \in \mathcal B_K (G)$.
This means that the embeddings $\mathcal B_K (G) \hookrightarrow \mathcal F (G)$ and $\widehat{\mathcal B_K (G)} \hookrightarrow \mathcal F (G)$ are both cofinal and $\mathcal F (G) / \mathcal B_K (G)$ is a group itself.
Moreover,
\[
  \mathcal F (G) / \mathcal B_K (G) = \mathsf q \big( \mathcal F (G) \big) / \mathsf q \big( \mathcal B_K (G) \big) = \mathsf q \big( \mathcal F (G) \big) / \mathsf q \big( \widehat{\mathcal B_K (G)} \big)
\]
is the class group of the embedding $\widehat{\mathcal B_K (G)} \hookrightarrow \mathcal F (G)$.

If $K = G$, then $\widehat{\mathcal B_K (G)} = \mathcal B_K (G) = \mathcal F (G)$, and all the statements hold.
Suppose now that $K$ is a proper subgroup of $G$.
If $|G| = 2$, then $K = \{ 1_G \}$, and so $\widehat{\mathcal B_K (G)} = \mathcal B_K (G) = \mathcal F \big( \{ 1_G, g^{[2]} \} \big)$, with $g \in G \setminus \{ 1_G \}$, is factorial.
However, $g \in G \setminus \{ 1_G \}$ is not the gcd of any finite subset of $\mathcal B_K (G)$, which implies that the embedding is not a divisor theory.
Thus, we may assume that $|G| \ge 3$.
Let $g \in G$.
If $g \in K$, then clearly $g$ is the gcd of the product-$K$ sequence $g \in \mathcal A_K (G) \subseteq \widehat{\mathcal B_K (G)}$.
If $g \in G \setminus K$, then there exists $h \in G \setminus \{ g^{-1}, 1_G \}$, and hence $g$ is the gcd of two product-$K$ sequences $g \bdot g^{-1}$ and $g \bdot h \bdot (g^{-1}h^{-1})$ in $\mathcal F (G)$.
%Since $\widehat{\mathcal B(G)} \subseteq \widehat{\mathcal B_K (G)}$ and $\widehat{\mathcal B (G)} \hookrightarrow \mathcal F (G)$ is a divisor theory (see \cite[Section 3]{Fa-Zh23}), it follows that, for any $g \in G \setminus \{ 1_G \}$, there exists $U_1, \ldots, U_{\ell} \in \widehat{\mathcal B (G)}$ such that $g = \gcd ( U_1, \ldots, U_{\ell} )$ in $\mathcal F (G)$.
It remains to show that the embedding $\widehat{\mathcal B_K (G)} \hookrightarrow \mathcal F (G)$ is a divisor homomorphism.
Let $S, T \in \widehat{\mathcal B_K (G)}$ with $T \mid S$ in $\mathcal F (G)$.
Since $G/K$ is a torsion group, Proposition~\ref{pro:local}.2 ensures that $\pi (S) \subseteq KG'$ and $\pi (T) \subseteq KG'$.
Thus, $\pi \big( S \bdot T^{[-1]} \big) \subseteq \{ xy^{-1} \mid x \in \pi (S) \mbox{ and } y \in \pi (T) \} \subseteq KG'$. whence $S \bdot T^{[-1]} \in \widehat{\mathcal B_K (G)}$.
Therefore, the embedding $\widehat{\mathcal B_K (G)} \hookrightarrow \mathcal F (G)$ is a divisor theory, and by the equivalent conditions in Section~\ref{2}.2, $\widehat{\mathcal B_K (G)}$ is a Krull monoid.

Now, we define the map
\[
  \phi \colon \mathcal F (G) / \mathcal B_K (G) \to G/KG'
\]
by $\phi \big( [S] \big) = g(KG')$ for any $g \in \pi (S)$, where $[S] = S \mathsf q \big( \mathcal B_K (G) \big)$ is a congruence class of $S \in \mathcal F (G)$.

To show that $\phi$ is well-defined, assume that $[S] = [S']$ for $S, S' \in \mathcal F (G)$.
Then, $S \bdot T = S' \bdot T'$ for some $T, T' \in \mathcal B_K (G)$, and as a $KG'$-coset, we obtain that $\pi (S \bdot T) = \pi (S' \bdot T') \subseteq g(KG') \cap g'(KG')$, where $g \in \pi (S)$ and $g' \in \pi (S')$.
Hence, $g(KG') = g'(KG')$, and thus $\phi$ is well-defined.

For any $g \in G$, there exists $S \in \mathcal F (G)$ such that $g \in \pi (S)$, and thus $\phi ( [S] ) = g(KG')$, which implies that $\phi$ is surjective.

For $S, S' \in \mathcal F (G)$ with $g \in \pi (S)$ and $g' \in \pi (S')$,
\[
  \phi \big( [S] \big) + \phi \big( [S'] \big) = \left( g(KG') \right) \left( g' (KG') \right) = (gg')(KG') = \phi \big( [S \bdot S'] \big) = \phi \big( [S] + [S'] \big) \,.
\]
whence $\phi$ is a group homomorphism.

To show that $\phi$ is injective, let $S, S' \in \mathcal F (G)$ with $g \in \pi (S)$ and $g' \in \pi (S')$ such that $g(KG') = g'(KG')$.
Then, there are $n \in \mathbb N$, $x_1, \ldots, x_n, y_1, \ldots, y_n \in G$, and $k \in K$ such that
\[
  (g')^{-1}g = k {\small \prod}_{i \in [1,n]} x_i y_i x^{-1}_i y^{-1}_i \,.
\]
Let $T := \prod^{\bullet}_{i \in [1,n]} \big( x_i \bdot y_i \bdot x^{-1}_i \bdot y^{-1}_i \big) \in \mathcal F (G)$.
Then, we obtain that
\[
  S \bdot \big( S' \bdot g^{-1} \bdot k \bdot T \big) = S' \bdot \big( S \bdot g^{-1} \bdot k \bdot T \big) \in \mathcal F (G) \,.
\]
Note that $1_G \in \pi (T)$ and $k^{-1}(g')^{-1}g \in \pi (T)$.
Hence, $1_G \in \pi \big( S' \bdot g^{-1} \bdot k \bdot T \big)$ and $k \in \pi \big( S \bdot g^{-1} \bdot k \bdot T \big)$, which means that $[S] = [S']$ in $\mathcal F (G) / \mathsf q \big( \mathcal B_K (G) \big)$.

Finally, to show that every class contains prime divisors, let $S \in \mathcal F (G)$ with $g \in \pi (S)$.
If $g \in K$, then $S, g \in \mathcal B_K (G)$, and $S \bdot g = g \bdot S$, and thus $[S] = [g]$.
If $g \in G \setminus K$, then $g \bdot g^{-1} \in \mathcal B (G) \subseteq \mathcal B_K (G)$, and $S \bdot g^{-1} \in \mathcal B (G) \subseteq \mathcal B_K (G)$, which implies that $S \bdot \big( g \bdot g^{-1} \big) = g \bdot \big( S \bdot g^{-1} \big)$.
Hence, we obtain that $[S] = [g]$.
In particular, by the bijectivity of $\phi$, we obtain that $[S] \cap G = g (KG')$, which means that the class $[S]$ contains precisely $|KG'|$ prime divisors.

Now we suppose that $G/K$ is finite.
It suffices to show that $\mathcal B_K (G)$ is finitely generated.
Indeed, by Lemma~\ref{lem:C}, we infer that the monoid $\mathcal B_K (G)$ is a C-monoid in $\mathcal F (G)$ and $\widehat{\mathcal B_K (G)}$ is a finitely generated Krull monoid.
We show that $|U| \le |G/K|$ for every $U \in \mathcal A_K (G)$.
Let $U = g_1 \bdot \ldots \bdot g_{\ell} \in \mathcal A_K (G)$ with $g_1 \cdots g_{\ell} \in K$.
If some two elements in the set $\{ \phi_K (g_1) \cdots \phi_K (g_i) \mid i \in [1,\ell] \} \subseteq G/K$ are same, then there exist $i, j \in [1,\ell]$ with $i \lneq j$ and $\phi_K (g_1) \cdots \phi_K (g_i) = \phi_K (g_1) \cdots \phi_K (g_j)$, whence $\phi_K (g_{i+1}) \cdots \phi_K (g_j) = K$ and
\[
  U = \big( g_1 \bdot \ldots \bdot g_i \bdot g_{j+1} \bdot \ldots \bdot g_{\ell} \big) \bdot \big( g_{i+1} \bdot \ldots \bdot g_j \big)
\]
is a product of two non-trivial product-K subsequences, contradicting that $U \in \mathcal A_K (G)$.
Thus, it follows that $|U| = \ell = \big| \{ \phi_K (g_1) \cdots \phi_K (g_i) \mid i \in [1,\ell] \} \big|  \le |G/K|$, ensuring that $\mathsf D_K (G) \le |G/K|$ is also finite.

\smallskip
2. (a) $\Rightarrow$ (b) Suppose that $G' \subseteq K$.
Then, $K \subseteq KG' \subseteq KK \subseteq K$, and so $KG' = K$.
Moreover, since $G/K$ is abelian, we infer that, for every $S \in \mathcal F (G)$, $\pi (S)$ is contained in a $K$-coset.
Since $G/K$ is a torsion group, Proposition~\ref{pro:local}.2 ensures that $\widehat{\mathcal B_K (G)} = \{ S \in \mathcal F (G) \mid \pi (S) \subseteq K \} = \mathcal B_K (G)$, and by Item 1, we infer that $\mathcal B_K (G)$ is a Krull monoid.

\smallskip
(b) $\Leftrightarrow$ (c) $\Leftrightarrow$ (d) follows by Proposition~\ref{pro:local}.2.

\smallskip
(d) $\Rightarrow$ (h) Let $S, T \in \mathcal B_K (G)$ with $T \mid S$ in $\mathcal F (G)$.
Then, $S = T \bdot W$ for some $W = g_1 \bdot \ldots \bdot g_{\ell} \in \mathcal F (G)$.
Since $G/K$ is a torsion group, we can define $n = \lcm \left\{ \ord_K (g_i) \mid i \in [1,\ell] \right\}$.
Hence, $\big( S \bdot T^{[-1]} \big)^{[n]} = W^{[n]} \in \mathcal B_K (G)$, and since $\mathcal B_K (G)$ is root-closed, we infer that $W = S \bdot T^{[-]} \in \mathcal B_K (G)$.
Thus, $T \mid S$ in $\mathcal B_K (G)$.

\smallskip
(h) $\Rightarrow$ (b) follows from the equivalent conditions for Krull monoids (see Section~\ref{2}.2).

\smallskip
(b) $\Rightarrow$ (e) and (f) Obvious.

\smallskip
(e) $\Rightarrow$ (a) Suppose that $\mathcal B_K (G)$ is a transfer Krull monoid.
Then, there exists a subset $G^{*}_0$ of an abelian group $G^{*}$ such that $\varphi \colon \mathcal B_K (G) \to \mathcal B (G^{*}_0)$ is a transfer homomorphism.
To show that $G/K$ is abelian, we assume to the contrary that $G/K$ is non-abelian.
Then, there exist $gK, hK \in G/K$ with $(gK)(hK) \neq (hK)(gK)$, equivalently $ghg^{-1}h^{-1} \notin K$.
We proceed with the following assertion.

\smallskip
\begin{itemize}
\item[\namedlabel{itm:A}{\bf A}.] $S := h \bdot h^{-1} \bdot g \bdot (hg^{-1}h^{-1}) \in \mathcal A_K (G)$.
\end{itemize}

\begin{proof}[Proof of \ref{itm:A}]
It is clear that $S \in \mathcal B (G) \subseteq \mathcal B_K (G)$.
Assume to the contrary that $S \notin \mathcal A_K (G)$.
Then, $S = S_1 \bdot S_2$ for some non-trivial sequences $S_1, S_2 \in \mathcal B_K (G)$ with $h \mid S_1$.
Since $gK$ and $hK$ are non-commuting elements, we may further assume that $g, h \notin K$.
This allows us to suppose that $|S_1| = |S_2| = 2$.
Since $ghg^{-1}h^{-1} \notin K$, we obtain that $S_1 \neq h \bdot h^{-1}$.
If $S_1 = h \bdot g$, then $h = g^{-1}k$ for some $k \in K$, which ensures that $ghg^{-1}h^{-1} \in K$, a contradiction.
If $S_1 = h \bdot (hg^{-1}h^{-1})$, then $S_2 = h^{-1} \bdot g$, and so $g = hk$ for some $k \in K$, which again leads to a contradiction.
\qedhere{[\ref{itm:A}]}
\end{proof}

\smallskip
By \ref{itm:A}, we obtain that $\varphi (S)$ is a minimal product-one sequence in $\mathcal B (G^{*}_0)$.
Since $G/K$ is a torsion group, we can define $n = \ord_K (ghg^{-1}h^{-1}) \ge 2$, and then $S^{[n]} = \big( h \bdot h^{-1} \big)^{[n]} \bdot \big( g \bdot (hg^{-1}h^{-1}) \big)^{[n]}$, and 
\begin{equation} \label{eq:divide}~
  \varphi (S)^{[n]} = \varphi \big( S^{[n]} \big) = \varphi \big( h \bdot h^{-1} \big)^{[n]} \bdot \varphi \Big( \big( g \bdot (hg^{-1}h^{-1}) \big)^{[n]} \Big) \,.
\end{equation}
Since $G^{*}$ is abelian, the embedding $\mathcal B (G^{*}_0) \hookrightarrow \mathcal F (G^{*}_0)$ is a divisor homomorphism.
Thus, in view of (\ref{eq:divide}), we obtain that $\varphi \big( h \bdot h^{-1} \big)^{[n]}$ divides $\varphi (S)^{[n]}$ in $\mathcal B (G^{*}_0)$.
Since $\mathcal B (G^{*}_0)$ is root-closed, we infer that $\varphi \big( h \bdot h^{-1} \big)$ divides $\varphi (S)$ in $\mathcal B (G^{*}_0)$.
Note that $\varphi \big( h \bdot h^{-1} \big)$ and $\varphi (S)$ are both atoms in $\mathcal B (G^{*}_0)$.
It follows that $\varphi \big( h \bdot h^{-1} \big) = \varphi (S)$, which ensures that $\varphi \Big( \big( g \bdot (hg^{-1}h^{-1}) \big)^{[n]} \Big)$ is a trivial sequence in $\mathcal B (G^{*}_0)$, a contradiction.

\smallskip
(f) $\Rightarrow$ (b) Suppose that $\mathcal B_K (G)$ is a weakly Krull monoid.
Since $G/K$ is a torsion group, Proposition~\ref{pro:local}.2 ensures that
\[
  \widehat{\mathcal B_K (G)} = \{ S \in \mathcal F (G) \mid \pi (S) \subseteq KG' \} = \small{\bigcap}_{\mathfrak p \in \mathfrak X (\mathcal B_K (G))} \mathcal B_K (G)_{\mathfrak p} = \mathcal B_K (G) \,,
\]
whence the assertion follows from Item 1.

\smallskip
(b) $\Rightarrow$ (g) follows by Item 1 and the equivalent conditions for Krull monoids (see Section~\ref{2}.2).

\smallskip
(g) $\Rightarrow$ (h) Obvious.

\smallskip
(h) $\Leftrightarrow$ (i) Since $G/K$ is a torsion group, the embedding $\mathcal B_K (G) \hookrightarrow \mathcal F (G)$ is cofinal, as shown in the proof of Item 1.
Then, the map $\theta \colon \mathcal C \big( \mathcal B_K (G), \mathcal F (G) \big) \to \mathcal F (G) / \mathcal B_K (G)$, given by $\theta \big( [S]^{\mathcal F (G)}_{\mathcal B_K (G)} \big) = S \mathsf q \big( \mathcal B_K (G) \big)$, is a well-defined group epimorphism, and the equivalent conditions follow from \cite[Proposition 2.8.7]{Ge-HK06}.
\end{proof}

\smallskip
Let $H$ be an atomic monoid.
Since questions of factorizations do not distinguish between associated atoms, we implicitly identify associated atoms throughout.
For two factorizations $z, z'$ in $H$, we can write
\[
  z = u_1 \cdot \ldots \cdot u_{\ell} \cdot v_1 \cdot \ldots \cdot v_m \quad \mbox{ and } \quad z' = u_1 \cdot \ldots \cdot u_{\ell} \cdot w_1 \cdot \ldots \cdot w_n
\]
for $\ell, m, n \in \mathbb N_0$ and $u_1, \ldots u_{\ell}, v_1, \ldots, v_m, w_1, \ldots, w_n \in \mathcal A (H)$ with $\{ v_1, \ldots, v_m \} \cap \{ w_1, \ldots, w_n \} = \emptyset$.
Then, we denote by $\mathsf d (z, z') = \max \{ m, n \} \in \mathbb N_0$ the {\it distance} between $z$ and $z'$.
For $a \in H$, we define $\mathsf c (a)$ to be the smallest integer $N \in \mathbb N_0 \cup \{ \infty \}$ such that any two factorizations $z$ and $z'$ of $a$ can be concatenated by an $N$-chain, i.e., there exists a finite sequence of factorizations $z = z_0, z_1, \ldots, z_k = z'$ of $a$ such that $\mathsf d (z_{i-1}, z_i) \le N$ for every $i \in [1,k]$.
Then, we denote by $\mathsf c (H) = \sup \{ \mathsf c (a) \mid a \in H \} \in \mathbb N_0 \cup \{ \infty \}$ the {\it catenary degree} of $H$.
By definition, it is easy to see that $H$ is factorial if and only if $\mathsf c (H) = 0$.
The catenary degree is another well-understood arithmetic invariant for Krull monoids, in particular for the monoid $\mathcal B (G)$ when $G$ is abelian (see \cite{Ge-Gr-Sc11,Ge-Zh15}), and it has also recently been studied in the non-abelian setting (see \cite{Ge-Gr-Oh-Zh22,Oh26b}). 

Let $\theta \colon H \to D$ be a transfer homomorphism of atomic monoids.
If $a = u_1 \cdot \ldots \cdot u_{\ell}$ is a factorization of $a$ in $H$, then Lemma~\ref{lem:trans} ensures that $\overline{\theta} (a) := \theta (u_1) \cdot \ldots \cdot \theta (u_{\ell})$ is a factorization of $\theta (a)$ in $D$.
For $a \in H$, $\mathsf c (a, \theta)$ denotes the smallest integer $N \in \mathbb N_0 \cup \{ \infty \}$ with the following property:
\begin{itemize}
\item[] If $z$ and $z'$ are factorizations of $a$ in $H$ with $\overline{\theta} (x) = \overline{\theta} (y)$, then there exist $k \in \mathbb N_0$ and factorizations $z = z_0, z_1, \ldots, z_k = z'$ of $a$ in $H$ such that $\overline{\theta} (z_i) = \overline{\theta} (x)$ and $\mathsf d (z_{i-1}, z_i) \le N$ for every $i \in [1,k]$.
\end{itemize}
We denote by $\mathsf c (H, \theta) = \sup \{ \mathsf c (a, \theta) \mid a \in H \} \in \mathbb N_0 \cup \{ \infty \}$ the {\it catenary degree in the fibres of $\theta$}.
Then, it is well-known that (see \cite[Lemma 1.4.4]{Ge-Gr-Zh26})
\begin{equation} \label{eq:cat}~
  \mathsf c (D) \le \mathsf c (H) \le \max \{ \mathsf c (D), \mathsf c (H, \theta) \} \,.
\end{equation}

The following theorem is the main result on arithmetic properties of $\mathcal B_K (G)$.
If $G$ is abelian, then Theorem~\ref{thm:struc} ensures that $\mathcal B_K (G)$ is a Krull monoid with the class group $G/K$, and thus the transfer result in the following theorem just follows directly from the classical result for (general) Krull monoids described in Proposition~\ref{pro:Krull}.
If $G$ is not necessarily abelian but $K$ is trivial, then $\mathcal B_K (G) \cong \mathcal B (G/K)$, and so this isomorphism is a transfer homomorphism.
The next theorem shows that this remains true in the general case.
In particular, if $G/K$ is finite, then $\mathcal B_K (G)$ is a C-monoid (by Theorem~\ref{thm:struc}) and admits a transfer homomorphism analogous to that in the general Krull case of Proposition~\ref{pro:Krull}.

\smallskip
\begin{theorem} \label{thm:transfer}~
Let $G$ be a group and $K \unlhd G$.
\begin{enumerate}
\item The canonical map $\mathcal B_K (G) \to \mathcal B \big( G/K \big)$ is a transfer homomorphism, and so $\mathcal L \big( \mathcal B_K (G) \big) = \mathcal L \big( \mathcal B (G/K) \big)$.
	Moreover, we obtain that
	\begin{enumerate}
	\smallskip
	\item[(i)] $\mathcal U_k \big( \mathcal B_K (G) \big) = \mathcal U_k \big( \mathcal B (G/K) \big)$ is always an interval for every $k \in \mathbb N$,
	
	\smallskip
	\item[(ii)] $\mathsf D_K (G) = \mathsf D \big( G/K \big)$, $\rho_k \big( \mathcal B_K (G) \big) = \rho_k \big( \mathcal B (G/K) \big) \le \frac{k\mathsf D (G/K)}{2} = \frac{k \mathsf D_K (G)}{2}$ for every $k \in \mathbb N$, and $\rho \big( \mathcal B_K (G) \big) = \rho \big( \mathcal B (G/K) \big) \le \frac{\mathsf D (G/K)}{2} = \frac{\mathsf D_K (G)}{2}$.
		     In particular, if $G/K$ is finite, then $\rho \big( \mathcal B_K (G) \big) = \frac{\mathsf D_K (G)}{2}$ is finite and accepted.

	\end{enumerate}
	
\smallskip
\item The class semigroup of $\mathcal B_K (G)$ in $\mathcal F (G)$ is isomorphic to the class semigroup of $\mathcal B (G/K)$ in $\mathcal F (G/K)$.

\smallskip
\item The following statements are equivalent:
	\begin{enumerate}
	\smallskip
	\item[(a)] $\mathcal B_K (G)$ is half-factorial.

	\smallskip
	\item[(b)] $|G/K| \le 2$.
	
	\smallskip
	\item[(c)] $\mathcal B \big( G/K \big)$ is factorial.
	
	\smallskip
	\item[(d)] $\mathcal B \big( G/K \big)$ is half-factorial.
	\end{enumerate}

\smallskip
\item The following statements are equivalent:
	\begin{enumerate}
	\smallskip
	\item[(a)] $\mathcal B_K (G)$ is factorial.
	
	\smallskip
	\item[(b)] $|G/K| \le 2$ and $|G \setminus K| \le1$.
	
	\smallskip
	\item[(c)] Either $K = G$, or else $G \cong C_2$ (a cyclic group of order 2), and $K = \{ 1_G \}$.
	\end{enumerate}
	\medskip
	In particular, if we take $K = \{ 1_G \}$, then 3.(b) is equivalent to 4.(b), and hence all statements in Items 3 and 4 are equivalent.

\medskip
\item We have $\mathsf c \big( \mathcal B_K (G), \phi_K \big) \le 2$, and  if $|G/K| \ge 3$, then $\mathsf c \big( \mathcal B_K (G) \big) = \mathsf c \big( \mathcal B (G/K) \big)$.
\end{enumerate}
\end{theorem}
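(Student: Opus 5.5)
The whole proof will be organized around the canonical map $\phi_K \colon \mathcal B_K(G) \to \mathcal B(G/K)$, i.e., the restriction of the extension $\phi_K \colon \mathcal F(G) \to \mathcal F(G/K)$. By (\ref{eq:phi}), $S \in \mathcal B_K(G)$ if and only if $\phi_K(S) \in \mathcal B(G/K)$. Both monoids are reduced and $\phi_K$ is surjective on terms, so (T1) is immediate.

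For (T2), I take $S \in \mathcal B_K(G)$ with $\phi_K(S) = B \bdot C$ and $B, C \in \mathcal B(G/K)$. Because $\phi_K$ acts termwise, I can split $S = V \bdot W$ with $\phi_K(V) = B$ and $\phi_K(W) = C$, and (\ref{eq:phi}) then puts $V, W$ in $\mathcal B_K(G)$. Lemma~\ref{lem:trans} transfers $\mathcal L$, $\mathcal U_k$, $\rho_k$, $\rho$ and accepted elasticity. Atoms correspond under $\phi_K$ and lengths are preserved, so $\mathsf D_K(G) = \mathsf D(G/K)$. For (i) I would invoke the known result that $\mathcal U_k(\mathcal B(H))$ is an interval for every group $H$; I expect to cite the recent non-abelian literature rather than prove it. For (ii) I use the standard bound $\rho_k \le k\mathsf D/2$: every atom has length at most $\mathsf D$ and at least $2$ apart from the atom $1$, which can be handled separately. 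When $G/K$ is finite, accepted elasticity $\mathsf D(G/K)/2$ is realized by $U \bdot U^{-1}$ for an atom $U$ of maximal length, where $U^{-1}$ denotes the inverse sequence. This splits into $\mathsf D$ atoms of length two; the inverse sequence is product-one by reversing the order.

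For Item 2, I show that $S \sim_{\mathcal B_K(G)} S'$ in $\mathcal F(G)$ if and only if $\phi_K(S) \sim \phi_K(S')$ in $\mathcal F(G/K)$. By (\ref{eq:phi}), $S \bdot T \in \mathcal B_K(G)$ depends only on $\phi_K(S \bdot T)$, and every sequence over $G/K$ lifts. Hence $[S] \mapsto [\phi_K(S)]$ is a well-defined, injective, surjective semigroup homomorphism.

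For Item 3, (a)$\Leftrightarrow$(d) follows from Item 1 and Lemma~\ref{lem:trans}. For (b)$\Rightarrow$(c), note that $\mathcal B(C_1)$ and $\mathcal B(C_2)$ are free. (c)$\Rightarrow$(d) is trivial. The substantive step is (d)$\Rightarrow$(b):
\begin{itemize}
\item If $G/K$ has an element $x$ of order $n \ge 3$, then $(x^{[n]}) \bdot ((x^{-1})^{[n]}) = (x \bdot x^{-1})^{[n]}$ gives lengths $2$ and $n$.
\item If $G/K$ is an elementary $2$-group with distinct nontrivial $x, y$, then $(x \bdot y \bdot xy)^{[2]}$ has factorizations of lengths $2$ and $3$.
\end{itemize}
For Item 4, a transfer homomorphism does not preserve factoriality, so I work directly. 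For (a)$\Rightarrow$(b), factoriality gives half-factoriality, hence $|G/K| \le 2$. If two distinct elements $g \ne h$ lie in $G \setminus K$, then $g^{[2]} \bdot h^{[2]} = (g \bdot h)^{[2]}$ gives two distinct factorizations. For (b)$\Rightarrow$(c): either $K = G$, or $|G \setminus K| = 1$ forces $|K| = 1$ and $G \cong C_2$. (c)$\Rightarrow$(a) is clear from the explicit free monoids.

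Item 5 is the main obstacle. The plan is to show that two factorizations $z, z'$ of $S$ with the same image under $\overline{\phi_K}$ are connected by a chain of steps of distance at most $2$. Same image means there is a bijection between the atoms of $z$ and of $z'$ such that paired atoms $U, U'$ have $\phi_K(U) = \phi_K(U')$; the atoms differ only in which lifts of equal classes they contain. I would move from $z$ toward $z'$ by swapping single terms $g \in U_i$ and $g' \in U_j$ with $\phi_K(g) = \phi_K(g')$. Each swap replaces the pair $U_i \bdot U_j$ by $U_i'' \bdot U_j''$, and by (\ref{eq:phi}) the new sequences are again product-$K$ atoms, since their images are unchanged. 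Each swap therefore has distance at most $2$. Inducting on the number of mismatched terms gives $\mathsf c(\mathcal B_K(G), \phi_K) \le 2$. Then (\ref{eq:cat}) gives $\mathsf c(\mathcal B(G/K)) \le \mathsf c(\mathcal B_K(G)) \le \max\{\mathsf c(\mathcal B(G/K)), 2\}$. Finally, $|G/K| \ge 3$ should force $\mathsf c(\mathcal B(G/K)) \ge 2$: $\mathcal B(G/K)$ is then not factorial by Item 3, and any non-factorial atomic monoid has catenary degree at least $2$. This settles the equality.
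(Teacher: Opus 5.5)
Your plan follows the paper's proof essentially item by item. You use termwise splitting for (T2), lifting of sequences for the class semigroup, the atoms $g^{[2]}, h^{[2]}, g\bdot h$ in Item 4, single-term swaps between two atoms with equal cosets in Item 5, and then the catenary inequality. In Item 1(ii) your direct argument, the upper bound $\rho_k\le k\mathsf D/2$ together with the realization by $U\bdot U^{-1}$, even makes unnecessary the paper's external citations for the bound and for $\rho(\mathcal B(G/K))=\mathsf D(G/K)/2$.

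The genuine gap is in Item 3, exactly where you replace the paper's appeal to the classical fact ``$\mathcal B(H)$ is half-factorial iff $|H|\le 2$, for an arbitrary group $H$'' with your own argument for (d)$\Rightarrow$(b). Your two cases are ``$G/K$ has an element of finite order $n\ge 3$'' and ``$G/K$ is an elementary $2$-group''. They do not exhaust groups of order at least $3$, and the theorem makes no torsion assumption on $G/K$. For $G=\mathbb Z$ and $K$ trivial neither case applies, and $x^{[n]}$ makes no sense for $x$ of infinite order. Your Item 4 (a)$\Rightarrow$(b) and the last step of Item 5 (non-factoriality of $\mathcal B(G/K)$ when $|G/K|\ge 3$) both rely on Item 3, so the gap propagates to them. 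The missing case is easy to supply. If $x\in G/K$ has infinite order, then $U=x^{[2]}\bdot x^{-2}$ and $U^{-1}=(x^{-1})^{[2]}\bdot x^{2}$ are atoms, since no proper nonempty subsequence has product $1$. Moreover $U\bdot U^{-1}=(x\bdot x^{-1})^{[2]}\bdot(x^{2}\bdot x^{-2})$, so $\{2,3\}\subseteq\mathsf L(U\bdot U^{-1})$.

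Two smaller points. First, your bound $\rho_k\le k\mathsf D/2$ ``with the atom $1$ handled separately'' works only when $\mathsf D(G/K)\ge 2$, i.e. $K\ne G$. For $K=G$ one has $\mathcal B_K(G)=\mathcal F(G)$, $\mathsf D(G/K)=1$ and $\rho_k=k>k/2$. So the inequality in (ii) and the formula $\rho(\mathcal B_K(G))=\mathsf D_K(G)/2$ are actually false in that case. The paper's proof has the same oversight (it declares the case $K=G$ classical), so you should state this exclusion explicitly. Second, in the acceptance argument you need $U^{-1}$ to be an atom, not merely product-one. This holds because $U^{-1}=V\bdot W$ in $\mathcal B(G/K)$ would give $U=V^{-1}\bdot W^{-1}$.
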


\begin{proof}
1. If $K = G$, then $\mathcal B_K (G) = \mathcal F (G)$ is factorial with trivial class group, and all claims just followed by classical result Proposition~\ref{pro:Krull}.
Now, in view of (\ref{eq:phi}), recall that, for every $S \in \mathcal F (G)$, $S \in \mathcal B_K (G)$ if and only if $\phi_K (S) \in \mathcal B (G/K)$.
Thus, we infer that $\phi_K \big( \mathcal B_K (G) \big) = \mathcal B (G/K)$, so that the restriction of $\phi_K \big|_{\mathcal B_K (G)} \colon \mathcal B_K (G) \to \mathcal B (G/K)$ is surjective and $\phi^{-1}_K  \big( 1_{\mathcal F (G/K)} \big) = \{ 1_{\mathcal F (G)} \}$.
To conclude that $\phi_K \big|_{\mathcal B_K (G)}$ is a transfer homomorphism, let $S = g_1 \bdot \ldots \bdot g_{\ell} \in \mathcal B_K (G)$.
Suppose that $\phi_K (S) = T \bdot W$ with $T, W \in \mathcal B (G/K)$.
By renumbering if necessary, we may assume that $T = \phi_K (g_1) \bdot \ldots \bdot \phi_K (g_t)$ and $W = \phi_K (g_{t+1}) \bdot \ldots \bdot \phi_K (g_{\ell})$ for some $t \in [1, \ell]$.
Then, $g_1 \bdot \ldots \bdot g_t = \phi^{-1}_K (T) \in \mathcal B_K (G)$ and $g_{t+1} \bdot \ldots \bdot g_{\ell} = \phi^{-1}_K (W) \in \mathcal B_K (G)$, and we obviously obtain that $S = \phi^{-1}_K (T) \bdot \phi^{-1}_K (W)$.
Thus, the restriction of $\phi_K$ to $\mathcal B_K (G)$ is a transfer homomorphism to $\mathcal B (G/K)$, which ensures that $\mathcal L \big( \mathcal B_K (G) \big) = \mathcal L \big( \mathcal B (G/K) \big)$.
Moreover, all remaining statements follow by Lemma~\ref{lem:trans}: %$\mathcal L \big( \mathcal B_K (G) \big) = \mathcal L \big( \mathcal B (G/K) \big)$ which directly ensures the followings:

\smallskip
(i) For every $k \in \mathbb N$, we obtain that $\mathcal U_k \big( \mathcal B_K (G) \big) = \mathcal U_k \big( \mathcal B (G/K) \big)$, and the latter is always an interval. This is a well-known result for the monoid of product-one sequence; see \cite[Theorem 4.4]{Fa-Zh23}.
More precisely, these are all finite intervals if $G/K$ is finite, and $\mathcal U_k \big( \mathcal B_K (G) \big) = \mathcal U_k \big( \mathcal B (G/K) \big) = \mathbb N_{\ge 2}$ if $G/K$ is infinite.

\smallskip
(ii) Since $\phi_K \big( \mathcal A_K (G) \big) = \mathcal A (G/K)$, we obtain that
\[
  \mathsf D_K (G) = \sup \big\{ |S| \mid S \in \mathcal A_K (G) \big\} = \sup \big\{ |T| \mid T \in \phi_K \big( \mathcal A_K (G) \big) \big\} = \mathsf D (G/K) \,.
\]
Moreover, since $\mathsf L (S) = \mathsf L \big( \phi_K (S) \big)$ for every $S \in \mathcal B_K (G)$, it follows that, for every $k \in \mathbb N$,
\begin{equation} \label{eq:rho_k}~
  \rho_k \big( \mathcal B_K (G) \big) = \rho_k \big( \mathcal B (G/K) \big) \le \frac{k \mathsf D (G/K)}{2} = \frac{k \mathsf D_K (G)}{2}
\end{equation}
where inequality follows from \cite[Proposition 4.1]{Fa-Zh23}.
Thus, in view of (\ref{eq:rho}), we also obtain that
\begin{equation} \label{eq:rhoBK}~
  \rho \big( \mathcal B_K (G) \big) = \rho \big( \mathcal B (G/K) \big) \le \frac{\mathsf D (G/K)}{2} = \frac{\mathsf D_K (G)}{2} \,.
\end{equation}

Suppose now that $G/K$ is finite.
Then, $\rho \big( \mathcal B (G/K) \big) = \frac{\mathsf D (G/K)}{2}$ by \cite[Proposition 5.6]{Oh20}, which implies that equality holds in (\ref{eq:rhoBK}).
Hence, combining this with Theorem~\ref{thm:struc}.1, we conclude that $\rho \big( \mathcal B_K (G) \big) = \frac{\mathsf D_K (G)}{2}$ is finite.
Let $U = g_1 \bdot \ldots \bdot g_{\ell} \in \mathcal A_K (G)$ with $\ell = \mathsf D_K (G)$.
Then, $U^{-1} := g^{-1}_1 \bdot \ldots \bdot g^{-1}_{\ell} \in \mathcal A_K (G)$, and so $\{ 2, \mathsf D_K (G) \} \subseteq \mathsf L \big( U \bdot U^{-1} \big)$.
Since $\max \mathsf L \big( U \bdot U^{-1} \big) \le \rho_2 \big( \mathcal B_K (G) \big) \le \mathsf D_K (G)$, where the second inequality follows from (\ref{eq:rho_k}), it follows that $\rho \big( \mathsf L ( U \bdot U^{-1} ) \big) = \frac{\mathsf D_K (G)}{2}$, whence $\rho \big( \mathcal B_K (G) \big)$ is accepted.

\smallskip
2. Define the map $\theta \colon \mathcal C \big( \mathcal B_K (G), \mathcal F (G) \big) \to \mathcal C \big( \mathcal B (G/K), \mathcal F (G/K) \big)$ by $\theta \left( [S]^{\mathcal F (G)}_{\mathcal B_K (G)} \right) = \left[ \phi_K (S) \right]^{\mathcal F (G/K)}_{\mathcal B (G/K)}$ for every $S \in \mathcal F (G)$.
To show that $\theta$ is well-defined, let $S_1 \sim_{\mathcal B_K (G)} S_2$ in $\mathcal F (G)$ for $S_1, S_2 \in \mathcal F (G)$.
Let $T' \in \mathcal F (G/K)$.
Since $\phi_K$ is surjective homomorphism, there exists $T \in \mathcal F (G)$ such that $\phi_K (T) = T'$, and $\phi_K (S_i) \bdot \phi_K (T) = \phi_K (S_i \bdot T)$ for all $i \in [1,2]$.
Thus, we obtain that
\[
  \begin{aligned}
    \phi_K (S_1) \bdot T' = \phi_K (S_1 \bdot T) \in \mathcal B (G/K) \quad & \Leftrightarrow \quad S_1 \bdot T \in \mathcal B_K (G) \\
    									     & \Leftrightarrow \quad S_2 \bdot T \in \mathcal B_K (G) \\
									     & \Leftrightarrow \quad \phi_K (S_2) \bdot T' = \phi_K (S_2 \bdot T) \in \mathcal B (G/K) \\
  \end{aligned}
\]
where second equivalent follows from $S_1 \sim_{\mathcal B_K (G)} S_2$ in $\mathcal F (G)$.
This follows that $\phi_K (S_1) \sim_{\mathcal B (G/K)} \phi_K (S_2)$ in $\mathcal F (G/K)$, which implies that $\theta$ is well-defined.
Clearly, $\theta$ is a surjective semigroup homomorphism.
To show the injection, let $S_1, S_2 \in \mathcal F (G)$ with $\phi_K (S_1) \sim_{\mathcal B (G/K)} \phi_K (S_2)$ in $\mathcal F (G/K)$.
For every $T \in \mathcal F (G)$,
\[
  \begin{aligned}
    S_1 \bdot T \in \mathcal B_K (G) \quad & \Leftrightarrow \quad \phi_K (S_1) \bdot \phi_K (T) = \phi_K (S_1 \bdot T) \in \mathcal B (G/K) \\
                                                                  & \Leftrightarrow \quad \phi_K (S_2 \bdot T) = \phi_K (S_2) \bdot \phi_K (T) \in \mathcal B (G/K) \\
                                                                  & \Leftrightarrow \quad S_2 \bdot T \in \mathcal B_K (G) \,,
  \end{aligned}
\]
where the second equivalent follows from $\phi_K (S_1) \sim_{\mathcal B (G/K)} \phi_K (S_2)$ in $\mathcal F (G/K)$.
Thus implies that $S_1 \sim_{\mathcal B_K (G)} S_2$ in $\mathcal F (G)$, whence $\theta$ is injective.

\smallskip
3. (b) $\Rightarrow$ (c) $\Rightarrow$ (d) $\Rightarrow$ (b) are classical results, and (a) $\Leftrightarrow$ (d) follows by Item 1 and Lemma~\ref{lem:trans}.3.

\smallskip
4. (a) $\Rightarrow$ (b) Since every factorial monoid is half-factorial, Item 3 shows that $|G/K| \le 2$.
If $|G/K| = 1$, then $K = G$ and the assertion follows.
Suppose that $|G/K| = 2$, so that $K \lneq G$.
We proceed with the following assertion.

\smallskip
\begin{itemize}
\item[\namedlabel{itm:AA}{\bf A}.] $\mathcal A_K (G) = K \cup \{ g_1 \bdot g_2 \mid g_1, g_2 \in G \setminus K \}$.
\end{itemize}

\begin{proof}[Proof of \ref{itm:AA}]
($\supseteq$) Clearly, every element in $K$ is an atom in $\mathcal B_K (G)$ of length 1.
Let $g_1, g_2 \in G\setminus K$.
Then, since $|G/K| = 2$, it follows that $g_1 K = g_2 K$ in $G/K$ and $(g_1 g_2)K = \big( g_1 K \big) \big( g_2 K \big) = \big( g_1 K \big)^{2} = K$, and hence $g_1 \bdot g_2$ is an atom in $\mathcal B_K (G)$ of length 2.

($\subseteq$) Let $S = g_1 \bdot \ldots \bdot g_{\ell} \in \mathcal A_K (G)$.
If $\supp (S) \cap K \neq \emptyset$, then there exists some $i \in [1,\ell]$ such that $g_i \in K$.
Since $S$ is product-$K$, Lemma~\ref{lem:shift} ensures that $g_i \sigma \big( S \bdot g^{[-1]}_i \big) \in K$ for some $\sigma \big( S \bdot g^{[-1]}_i \big) \in \pi \big( S \bdot g^{[-1]}_i \big)$, whence $\sigma \big( S \bdot g^{[-1]}_i \big) \in K$.
This follows that $S = g_i \bdot \big( S \bdot g^{[-1]}_i \big)$ is not an atom in $\mathcal B_K (G)$, and thus we further assume that $\supp (S) \cap K = \emptyset$.
Now, by passing to $G/K$, we obtain that $\big( g_1 K \big) \bdot \ldots \bdot \big( g_{\ell} K \big) \in \mathcal A \big( G/K \big)$.
Since $\supp (S) \cap K = \emptyset$, it follows that $g_1 K = \cdots = g_{\ell} K$.
Since $|G/K| = 2$, $\big( g_1 K \big) \bdot \ldots \bdot \big( g_{\ell} K \big)$ being an atom over $G/K$ must ensure that $\ell \le 2$.
If $\ell = 1$, then $S = g_1 \in K$.
If $\ell = 2$, then since $S \in \mathcal A_K (G)$, it follows that $S = g_1 \bdot g_2$ with $g_1, g_2 \in G \setminus K$.
\qedhere{[\ref{itm:AA}]}
\end{proof}

If $|G \setminus K| \ge 2$, then in view of \ref{itm:AA}, there exist $g_1, g_2 \in G \setminus K$ with $g_1 \neq g_2$ such that $g_1 \bdot g_2 \in \mathcal A_K (G)$, and thus $\big( g_1 \bdot g_2 \big)^{[2]} = \big( g^{[2]}_1 \big) \bdot \big( g^{[2]}_2 \big)$, contradicting that $\mathcal B_K (G)$ is factorial.

\smallskip
(b) $\Rightarrow$ (c) Suppose that $K$ is a proper normal subgroup of $G$.
Then, $|G/K| = 2$ and $|G \setminus K | = 1$.
It follows that $|G| = 2 |K|$ (by Lagrange's Theorem) and $|G| = |K| + 1$, and hence $|K| = 1$ and $|G| = 2$.

\smallskip
(c) $\Rightarrow$ (a) If $K = G$, then $\mathcal B_K (G) = \mathcal F (G)$ is factorial.
If $G \cong C_2$ and $K = \{ 1_G \}$, then $\mathcal B_K (G) \cong \mathcal B (C_2) = \mathcal F \big( \{ 1_G, g^{[2]} \} \big)$, with $g \in G \setminus K$, is factorial.

\smallskip
5. Let $S\in \mathcal{B}_K(G)$ and $z, z'$ be two factorizations of $S$ with $\overline{\phi}_K(z)=\overline{\phi}_K(z')$.
We assert that there exists a 2-chain $z=z_0,z_1,\ldots,z_k=z'$ of factorization of $S$ such that $\overline{\phi}_K(z)=\overline{\phi}_K(z_i)$ for all $i\in [1,k]$. 
Let 
\[
  z = U_1\bdot \ldots \bdot U_m \quad \mbox{ and } \quad z' = U'_1\bdot \ldots \bdot U'_n
\]
with $U_i, U_j' \in \mathcal{A}_K(G)$ for all $i\in [1,m]$ and $j\in [1,n]$.
The equality $\overline{\phi}_K(z)=\overline{\phi}_K(z')$ follows that $m=n$ and, after renumbering if necessary, we may assume that $\phi_K(U_i)=\phi_K(U'_i)$ for all $i\in [1,m]$.
Now, we set
\[
  s_i := |\phi_K(U_i)| = |\phi_K(U'_i)| \,\, \mbox{ for all } \,\, i \in [1,m] \quad \mbox{ and } \quad \ell := {\small \sum}_{i \in [1,m]} |\phi_K(U_i)| = {\small \sum}_{i \in [1,m]}|\phi_K(U'_i)| \,.
\]
For each $j \in [1,m]$, let $I_j:=[s_0+ \cdots +s_{j-1}+1,s_0+ \cdots +s_{j-1}+s_j]$ with $s_0:=0$, so that
\[
  [1,\ell] = I_1 \mathbin{\dot{\cup}} \cdots \mathbin{\dot{\cup}} I_m  
\]
is a disjoint union of $[1,\ell]$ into $m$ consecutive intervals with $|I_j| = s_j$ for $j \in [1,m]$.
Then, if we write $S = g_1 \bdot \ldots \bdot g_{\ell}$ with $g_1,\ldots, g_\ell\in G$, then $S = U_1\bdot \ldots \bdot U_m = U'_1\bdot \ldots \bdot U'_m$ with
\[
  U_j = {\small \prod}^{\bullet}_{i\in I_j}g_i \quad \mbox{ and } \quad U'_j = {\small \prod}^{\bullet}_{i\in I_j}g_{\sigma(i)} \,\, \mbox{ for } \,\, j \in [1,m] \,,
\]
where $\sigma$ is some permutation of $[1,\ell]$.
Since
\[
  {\small \prod}^{\bullet}_{i\in I_j}g_iK=\phi_K(U_j)=\phi_K(U'_j) = {\small \prod}^{\bullet}_{i\in I_j}g_{\sigma(i)}K \,\, \mbox{ for all } \,\, j \in [1,m] \,,
\]
we may assume by renumbering if necessary that
\begin{equation} \label{eq:sigma}~
  g_i K = g_{\sigma(i)} K \,\, \mbox{ for all } \,\, i \in [1,\ell] \,.
\end{equation}
Accordingly, we may choose the permutation $\sigma$ such that the number of indices $i \in [1,\ell]$ with $\sigma(i) = i$ is maximal among all permutations on $[1,\ell]$ satisfying (\ref{eq:sigma}).
If $\sigma (i) = i$ for all $i \in [1,\ell]$, then $U_j = U'_j$ for all $j \in [1,m]$, and hence $z = z'$, which is the desired 2-chain of factorizations of $S$ with $\overline{\phi}_K (z) = \overline{\phi}_K (z')$.
Thus, we suppose that $\sigma (s) \neq s$ for some $s \in [1, \ell]$, which ensures that $s \in I_{j_1}$ and $\sigma (s) \in I_{j_2}$ for some $j_1, j_2\in [1,m]$.
Then, in view of (\ref{eq:sigma}),
\begin{equation} \label{eq:coset}~
  g_{\sigma (s)}K = g_s K = g_{\sigma (\sigma^{-1} (s))}K =  g_{\sigma^{-1} (s)}K
\end{equation}
and $\sigma^{-1}(s)\ne s$.
If $j_1=j_2$, then we can construct a permutation $\tau$ on $[1,\ell]$ by swapping the images of $s$ and $\sigma^{-1} (s)$, while leaving all other elements unchanged.
More precisely, $\tau$ is a permutation on $[1, \ell]$ satisfying $\tau (s) = s$, $\tau \big( \sigma^{-1} (s) \big) = \sigma (s)$, and $\tau (i) = \sigma (i)$ for all $i \in [1,\ell] \setminus \{ s, \sigma^{-1} (s) \}$, which contradicts the maximality of $\sigma$.
Hence, we must have that $j_1\ne j_2$, and we consider the following sequences:
\[
  V_{j_1} := U_{j_1} \bdot (g_s)^{[-1]} \bdot g_{\sigma(s)} \quad \mbox{ and } \quad V_{j_2} := U_{j_2} \bdot (g_{\sigma(s)})^{[-1]} \bdot g_s \,.
\]
Since $\phi_K (V_{j_1}) =  \phi_K (U_{j_1}) \bdot ( g_s K)^{[-1]} \bdot g_{\sigma (s)} K$, in view of (\ref{eq:coset}), $\phi_K (V_{j_1}) = \phi_K (U_{j_1})$ and $\phi_K (V_{j_1}) \in \mathcal B (G/K)$, equivalently $V_{j_1} \in \mathcal B_K (G)$.
By symmetry, we obtain that $\phi_K (V_{j_2}) = \phi_K (U_{j_2})$ and $V_{j_2} \in \mathcal B_K (G)$.
Since $\phi_K$ is a transfer homomorphism, Lemma~\ref{lem:trans}.1 ensures that $V_{j_1}, V_{j_2} \in \mathcal A_K (G)$.
This allows us to obtain the factorization of $S$ 
\[
  z_1 := V_1 \bdot \ldots \bdot V_m \,,
\]
where $V_{j_1} := U_{j_1} \bdot (g_s)^{[-1]} \bdot g_{\sigma(s)}$, $V_{j_2} := U_{j_2} \bdot (g_{\sigma(s)})^{[-1]} \bdot g_s$, and $V_j:=U_j$ for all $j \in [1,m] \setminus \{j_1,j_2\}$.
Moreover, by construction, we have that $\overline{\phi}_K (z) = \overline{\phi}_K (z_1)$ and $\mathsf d (z, z_1) \le 2$.
If we define the permutation $\sigma_1$ for $z_1$ in the same way that $\sigma$ was defined for $z$, then the number of indices $i\in [1,\ell]$ with $\sigma(i) = i$ has increased by at least one, because every index fixed by $\sigma$ remains also fixed by $\sigma_1$, and additionally $\sigma_1(s)=s$.
Repeating this process, we obtain the desired 2-chain $z=z_0, z_1, \ldots, z_k = z'$ of factorizations of $S$ with $\overline{\phi}_K (z) = \overline{\phi}_K (z_i)$ for all $i \in [1,k]$, whence we conclude that $\mathsf c \big( \mathcal B_K (G), \phi_K \big) \le 2$.
The remaining assertion follows from (\ref{eq:cat}) and Items 3-4.
%Now, if $|G/K| \ge 3$, then Theorem~\ref{thm:factorial}.1 ensures that $\mathsf c(\mathcal{B}(G/K)) \ge 2$.
%Hence, it follows by \cite[Theorem 3.2.5]{Ge-HK06} that $\mathsf c \big( \mathcal B_K (G) \big) = \mathsf c \big( \mathcal B (G/K) \big)$.
\end{proof}

\smallskip
In Theorem~\ref{thm:struc}.2, we established when $\mathcal B_K (G)$ is root-closed or completely integrally closed.
As an immediate consequence of the transfer result, we obtain the following characterization of when $\mathcal B_K (G)$ a is seminormal  C-monoid.

\smallskip
\begin{corollary} \label{thm:seminormal}~
Let $G$ be a group and $K \unlhd G$ be such that $G/K$ is finite.
The following statements are equivalent:
\begin{enumerate}
\smallskip
\item[(a)] $\mathcal B_K (G)$ is a seminormal C-monoid.

\smallskip
\item[(b)] The class semigroup of $\mathcal B_K (G)$ in $\mathcal F (G)$ is Clifford.

\smallskip
\item[(c)] The class semigroup of $\mathcal B (G/K)$ in $\mathcal F (G/K)$ is Clifford.

\smallskip
\item[(d)] $\mathcal B \big( G/K \big)$ is a seminormal C-monoid.

\smallskip
\item[(e)] $\left| \left( G/ K \right)' \right| \le 2$.
\end{enumerate}
\end{corollary}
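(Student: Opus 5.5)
The plan is to chain the equivalences through Theorem~\ref{thm:transfer}.2 and known results for the monoid of product-one sequences over the finite group $G/K$. Since $G/K$ is finite, Theorem~\ref{thm:struc}.1 shows that $\mathcal B_K (G)$ is a finitely generated C-monoid in $\mathcal F (G)$. Also, every $S \in \mathcal F (G)$ has a power in $\mathcal B_K (G)$, so Lemma~\ref{lem:C}.2(b) applies. The same holds for $\mathcal B (G/K) \subseteq \mathcal F (G/K)$. Therefore, in (a) and (d) the C-monoid property is automatic, and only seminormality is at stake.

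(b) $\Leftrightarrow$ (c): Theorem~\ref{thm:transfer}.2 gives a semigroup isomorphism between the two class semigroups. Being Clifford (a union of groups, equivalently completely regular and commutative) is preserved under isomorphism, so this equivalence is immediate.

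(a) $\Leftrightarrow$ (b) and (c) $\Leftrightarrow$ (d): I would invoke the general result on C-monoids (Geroldinger--Kainrath--Reinhart type, as in \cite{Re13} or \cite{Ge-Zh19}). For a C-monoid $H$ defined in $F$ such that every element of $F$ has a power in $H$ (for instance, with $F^\times/H^\times$ torsion and $H \hookrightarrow F$ cofinal), $H$ is seminormal if and only if $\mathcal C (H,F)$ is Clifford. I would check that its hypotheses hold for both $\mathcal B_K (G) \subseteq \mathcal F (G)$ and $\mathcal B (G/K) \subseteq \mathcal F (G/K)$, using Lemma~\ref{lem:C}.2. The main point of care is that the reduced class semigroup versus the full class semigroup does not matter here, because both $\mathcal F$'s are reduced. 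If the cited characterization requires the ambient monoid to be the complete integral closure's divisor theory, I would adjust by using $\widehat H = \mathsf q (H) \cap F$ from Lemma~\ref{lem:C}.2. As a safeguard, (a) $\Leftrightarrow$ (d) can also be seen directly. Seminormality is the condition that $x \in \mathsf q (H) \cap F$ with $x^2, x^3 \in H$ implies $x \in H$, and membership in either monoid is detected by $\phi_K$ via (\ref{eq:phi}). I would have to verify that $\phi_K$ respects the relevant quotient-group elements, which I expect to work because $\widehat{\mathcal B_K (G)} \subseteq \mathcal F (G)$ by Proposition~\ref{pro:local}.2.

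(d) $\Leftrightarrow$ (e): This is the known characterization for finite groups, that $\mathcal B (H)$ is seminormal if and only if $|H'| \le 2$ (due to Geroldinger--Kainrath--Reinhart / Oh). I would cite it with $H = G/K$. This citation is the main external input. If it were unavailable, the hardest step would be proving (d) $\Rightarrow$ (e). For that, I would try constructing, when $|H'| \ge 3$, a sequence $S$ with $\pi (S) \subseteq H'$ and $1 \notin \pi (S)$ but with $S^{[2]}, S^{[3]} \in \mathcal B (H)$, using two distinct nontrivial commutator values.
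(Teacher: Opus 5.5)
Your proposal is correct and follows essentially the paper's route. The paper also proves (a)~$\Leftrightarrow$~(b) by combining Theorem~\ref{thm:struc}.1 with the Geroldinger--Zhong characterization of seminormal C-monoids via Clifford class semigroups \cite[Theorem 1.1]{Ge-Zh19}, and (b)~$\Leftrightarrow$~(c) by Theorem~\ref{thm:transfer}.2. For the remaining equivalences it cites Oh's result for $\mathcal B(G/K)$ \cite[Corollary 3.12]{Oh19}, which gives (c)~$\Leftrightarrow$~(d)~$\Leftrightarrow$~(e) in one step; your only deviation is that you obtain (c)~$\Leftrightarrow$~(d) by reapplying the C-monoid criterion to $\mathcal B(G/K)$, which is a cosmetic difference.
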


\begin{proof}
(a) $\Leftrightarrow$ (b) follows by Theorem~\ref{thm:struc}.1 and \cite[Theorem 1.1]{Ge-Zh19}, (b) $\Leftrightarrow$ (c) follows by Theorem~\ref{thm:transfer}.2, and (c) $\Leftrightarrow$ (d) $\Leftrightarrow$ (e) follows by \cite[Corollary 3.12]{Oh19}.
\end{proof}

\smallskip
In \cite[Proposition~3.6]{Ya26}, it was proved that the monoid of zero-sum sequences over a finite group with maximal elasticity is finitely generated.
Since then, it has remained an open question whether the monoid of product-$K$ sequences with maximal elasticity is finitely generated.
We answer this question affirmatively by proving that it is finitely generated.
We denote by
\[
  \mathcal B_{\rho,K} (G) = \left\{ S \in \mathcal B_K (G) \mid \rho \big( \mathsf L (S) \big) = \rho \big( \mathcal B_K (G) \big) \right\} \cup \{ 1_{\mathcal F (G)} \}
\]
the set of all product-$K$ sequences with maximal elasticity.
Suppose that $G/K$ is finite.
Then, by Theorem~\ref{thm:transfer}.1, $\mathcal{B}_{K}(G)$ has finite and accepted elasticity with $\rho \big( \mathcal{B}_{K}(G) \big) = \frac{\mathsf D_K (G)}{2}$, which means that $\mathcal B_K (G)$ is non-empty.
Moreover, since $\rho \big( \mathsf L (S \bdot T) \big) = \rho \big( \mathcal B_K (G) \big))$ for $S, T \in \mathcal B_K (G)$ with $\rho \big( \mathsf L (S) \big) = \rho \big( \mathsf L (T) \big) = \rho \big( \mathcal B_K (G) \big)$ (see \cite[Lemma 3.2]{Ya26}), it follows that $\mathcal B_{\rho,K} (G)$ forms a submonoid of $\mathcal B_K (G)$.

\smallskip
\begin{theorem} \label{thm:maxela}~
Let $G$ be a group and $K \unlhd G$ be such that $G/K$ is finite. Then, the monoid $\mathcal B_{\rho, K} (G)$ is finitely generated.
\end{theorem}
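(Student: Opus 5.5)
The approach is to reduce everything to the quotient $G/K$ via the transfer homomorphism $\phi_K \colon \mathcal B_K (G) \to \mathcal B (G/K)$ of Theorem~\ref{thm:transfer}.1, and then invoke \cite[Proposition~3.6]{Ya26} for the finite group $G/K$.

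The first step is to identify $\mathcal B_{\rho,K} (G)$ as a preimage. By Lemma~\ref{lem:trans}, $\mathsf L (S) = \mathsf L \big( \phi_K (S) \big)$ for every $S \in \mathcal B_K (G)$. By Theorem~\ref{thm:transfer}.1, $\rho \big( \mathcal B_K (G) \big) = \rho \big( \mathcal B (G/K) \big) = \frac{\mathsf D (G/K)}{2}$. Combining these gives
\[
  \mathcal B_{\rho,K} (G) = \phi_K^{-1} \big( \mathcal B_{\rho} (G/K) \big) \cap \mathcal B_K (G) \,,
\]
where $\mathcal B_\rho (G/K)$ denotes the monoid of product-one sequences over $G/K$ with maximal elasticity. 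In particular, $\phi_K$ restricts to a surjective monoid homomorphism $\mathcal B_{\rho,K} (G) \to \mathcal B_\rho (G/K)$. Surjectivity holds because every sequence over $G/K$ lifts termwise, and by (\ref{eq:phi}) every lift of a product-one sequence is product-$K$. Moreover $\phi_K$ preserves lengths of sequences: $|S| = |\phi_K (S)|$.

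The second step is to show that atoms of $\mathcal B_{\rho,K} (G)$ map to atoms of $\mathcal B_\rho (G/K)$. Let $A \in \mathcal A \big( \mathcal B_{\rho,K} (G) \big)$ and suppose $\phi_K (A) = T_1 \bdot T_2$ with $T_1, T_2 \in \mathcal B_\rho (G/K)$ non-trivial. I would split the terms of $A$ according to $T_1$ and $T_2$, exactly as in the proof of Theorem~\ref{thm:transfer}.1, to get $A = A_1 \bdot A_2$ with $\phi_K (A_i) = T_i$. By (\ref{eq:phi}), each $A_i$ is product-$K$. Its set of lengths equals $\mathsf L (T_i)$, so $A_i \in \mathcal B_{\rho,K} (G)$, contradicting that $A$ is an atom. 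Since $\mathcal B_\rho (G/K)$ is finitely generated by \cite[Proposition~3.6]{Ya26}, its atoms have length bounded by some $N$. Hence every atom of $\mathcal B_{\rho,K} (G)$ has length at most $N$, and every element of $\mathcal B_{\rho,K} (G)$ is a product of atoms of length at most $N$. Note that $\mathcal B_{\rho,K} (G)$ is atomic because it is a submonoid of the BF-monoid $\mathcal B_K (G)$ with trivial units.

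The final step is to pass from ``atoms of bounded length'' to ``finitely many atoms''. I expect this to be the main obstacle. The bound $N$ controls lengths only, and the terms of an atom may range over all of $G$. I would try first to use that an atom $A$ is determined by the data $\big( \phi_K (A), \text{choice of representatives in } K\text{-cosets} \big)$, with $\phi_K (A)$ ranging over the finite set $\mathcal A \big( \mathcal B_\rho (G/K) \big)$. I would then argue via Lemma~\ref{lem:C} and Theorem~\ref{thm:struc}.1 (finiteness of $\mathcal C^*$ and of $\mathsf D_K (G)$) that only finitely many such configurations are needed to generate. When $G$ itself is finite, this is immediate, because there are only finitely many sequences of length at most $N$ over $G$. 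In general, this step is where the finiteness of the relevant support has to be secured, and it is the point I would check most carefully.
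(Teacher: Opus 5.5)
Your suspicion about the last step is correct, and the gap there cannot be closed: the statement is false whenever $G$ is infinite. Your second step, together with its converse, shows that the atoms of $\mathcal B_{\rho,K}(G)$ are \emph{exactly} the $\phi_K$-lifts of atoms of $\mathcal B_\rho(G/K)$. For the converse: if $\phi_K(A)$ is an atom of $\mathcal B_\rho(G/K)$, then $\mathsf L(A)=\mathsf L(\phi_K(A))$ puts $A$ in $\mathcal B_{\rho,K}(G)$. Any splitting $A=A_1\bdot A_2$ there would map to a splitting of $\phi_K(A)$ in $\mathcal B_\rho(G/K)$ with non-trivial factors, because $\phi_K$ preserves $|\cdot|$. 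If $G$ is infinite, then $K$ is infinite, so every atom $t_1\bdot\ldots\bdot t_\ell$ of $\mathcal B_\rho(G/K)$ has infinitely many lifts (move one term inside its coset). Since $\mathcal B_{\rho,K}(G)$ is reduced, every generating set must contain all of these atoms. For instance, $G=K=\mathbb Z$ gives $\mathcal B_{\rho,K}(G)=\mathcal F(\mathbb Z)$. For $G=\mathbb Z$, $K=3\mathbb Z$, each $a_1\bdot a_2\bdot a_3\bdot b_1\bdot b_2\bdot b_3$ with $a_i\equiv 1$, $b_i\equiv 2 \pmod 3$ is an atom of $\mathcal B_{\rho,K}(G)$. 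It lifts the atom $g^{[3]}\bdot(-g)^{[3]}$ of $\mathcal B_\rho(C_3)$ (with $g=1+3\mathbb Z$), whose set of lengths is $\{2,3\}$. So finiteness of $\mathcal C^*$ or of $\mathsf D_K(G)$ cannot give a finite generating set: these bound lengths and classes, not the number of atoms.

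The paper's proof does not avoid this either. It begins with ``$\mathcal B_K(G)$ is finitely generated by Theorem~\ref{thm:struc}.1''. However, that theorem only shows $|U|\le |G/K|$ for $U\in\mathcal A_K(G)$, which is exactly the bounded-length information you judged insufficient. Meanwhile every $k\in K$ is itself an atom of $\mathcal B_K(G)$. In fact, Lemma~\ref{lem:C}.2 with condition (b), which holds because $G/K$ is torsion, shows that finite generation of $\mathcal B_K(G)$ forces $G$ to be finite. So both arguments are valid exactly when $G$ is finite, and that hypothesis must be added.

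Under that hypothesis your proof is complete, but it takes a different route. You push down to $G/K$ via the transfer homomorphism and import finite generation of $\mathcal B_\rho(G/K)$. The paper stays in $\mathcal B_K(G)$ and realizes $\mathcal B_{\rho,K}(G)$ as the image of the saturated submonoid $\mathcal R_\rho\subseteq\mathbb N_0^m\times\mathbb N_0^m$ cut out by $\pi(\mathbf x)=\pi(\mathbf y)$ and $2|\mathbf x|=\mathsf D_K(G)|\mathbf y|$, which is self-contained. One caution on your route: the paper describes \cite[Proposition~3.6]{Ya26} as a result on zero-sum sequences and presents this theorem as settling an open question. For non-abelian $G/K$ you therefore should not rely on that citation. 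Running the $\mathcal R_\rho$ argument on the finitely generated monoid $\mathcal B(G/K)$ supplies what you need, but then the detour through $G/K$ is unnecessary.
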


\begin{proof}
By Theorem~\ref{thm:struc}.1, $\mathcal B_{K} (G)$ is finitely generated, so we may assume that $\mathcal A_K (G) = \{ U_1, \ldots, U_m \}$.
Let $\pi \colon \mathbb N_0^m \to \mathcal B_K (G)$ be the factorization homomorphism defined by $\pi (a_1, \ldots, a_m) = U_1^{[a_1]} \bdot \ldots \bdot U_m^{[a_m]}$.
For $\mathbf x = (x_1, \ldots, x_m) \in \mathbb N_0^m$, put $|\mathbf x| = x_1 + \cdots + x_m$.
Now we define
\[
  \mathcal R_\rho = \big\{ (\mathbf x, \mathbf y) \in \mathbb N_0^m \times \mathbb N_0^m \mid \pi (\mathbf x) = \pi (\mathbf y) \,\, \mbox{ and } \,\, 2 |\mathbf x| = \mathsf D_K (G) | \mathbf y | \big\} \subseteq \mathbb N^{m}_0 \times \mathbb N^{m}_0 \,,
\]
and we assert that $\mathcal R_{\rho}$ is a finitely generated submonoid of $\mathbb N^{m}_0 \times \mathbb N^{m}_0$.
Let $( \mathbf x, \mathbf y), ( \mathbf u, \mathbf v) \in \mathcal R_{\rho}$.
Since $\pi$ is a homomorphism, we obtain that 
\begin{itemize}
\item[(i)] $\pi (\mathbf x + \mathbf u) = \pi (\mathbf x) \bdot \pi (\mathbf u) = \pi (\mathbf y) \bdot \pi (\mathbf v) = \pi (\mathbf y + \mathbf v)$, and 

\smallskip
\item[(ii)] $2 | \mathbf x + \mathbf u | = 2 | \mathbf x | + 2 | \mathbf u | = \mathsf D_K (G) | \mathbf y | + \mathsf D_K (G) | \mathbf v | = \mathsf D_K (G) | \mathbf y + \mathbf v |$, 
\end{itemize}
and thus $( \mathbf x + \mathbf u, \mathbf y + \mathbf v) \in \mathcal R_{\rho}$, which ensures that $\mathcal R_{\rho}$ is a monoid.
If $( \mathbf x, \mathbf y)$ divides $(\mathbf u, \mathbf v)$ in $\mathbb N^{m}_0 \times \mathbb N^{m}_0$, then $( \mathbf u - \mathbf x, \mathbf v - \mathbf y ) \in \mathbb N^{m}_0 \times \mathbb N^{m}_0$.
Since
\[
  \pi (\mathbf u - \mathbf x ) \bdot \pi (\mathbf y) = \pi (\mathbf u - \mathbf x ) \bdot \pi (\mathbf x) = \pi (\mathbf u)  = \pi (\mathbf v) = \pi (\mathbf v - \mathbf y) \bdot \pi (\mathbf y) \,,
\]
we infer that $\pi (\mathbf u - \mathbf x) = \pi (\mathbf v - \mathbf y)$.
Moreover,
\[
  2 | \mathbf u - \mathbf x | + 2 | \mathbf x | = 2 | \mathbf u | = \mathsf D_K (G) | \mathbf v| = \mathsf D_K (G) | \mathbf v - \mathbf y| + \mathsf D_K (G) |\mathbf y| \,,
\]
and since $2 | \mathbf x| = \mathsf D_K (G) | \mathbf y|$, we obtain that $2 | \mathbf u - \mathbf x | = \mathsf D_K (G) | \mathbf v - \mathbf y|$.
Thus, $( \mathbf u - \mathbf x, \mathbf v - \mathbf y ) \in \mathcal R_{\rho}$, and hence we infer that the embedding $\mathcal R_{\rho} \hookrightarrow \mathbb N^{m}_0 \times \mathbb N^{m}_0$ is a divisor homomorphism.
Since $\mathbb N^{m}_0 \times \mathbb N^{m}_0$ is finitely generated, it follows by \cite[Proposition 2.7.5]{Ge-HK06} that $\mathcal R_{\rho}$ is also finitely generated submonoid.
Now, we consider the map $\overline{\theta} \colon \mathcal R_{\rho} \overset{\theta}{\to} \mathbb N^{m}_0 \overset{\pi}{\to} \mathcal B_K (G)$, where $\theta$ is the projection onto the first coordinate.
Then, it suffices to show that $\overline{\theta} \big( \mathcal R_{\rho} \big) = \mathcal B_{\rho, K} (G)$.

First, let $S \in \overline{\theta} \big( \mathcal R_{\rho} \big)$.
Then, there exists $(\mathbf x, \mathbf y) \in \mathcal R_{\rho}$ with $\overline{\theta} (\mathbf x, \mathbf y) = S$.
If $(\mathbf x, \mathbf y)$ is identity in $\mathcal R_{\rho}$, then $S = 1_{\mathcal F (G)}$ is a trivial sequence, and so $S \in \mathcal B_{\rho,K} (G)$.
If $(\mathbf x, \mathbf y)$ is non-identity in $\mathcal R_{\rho}$, then both $|\mathbf x|$ and $|\mathbf y|$ are non-zero, and so $\frac{|\mathbf x|}{|\mathbf y|} = \frac{\mathsf D_K(G)}{2} = \rho \big( \mathcal{B}_{K}(G) \big)$.
This means that $S$ has two factorizations whose lengths realize the ratio $\rho \big( \mathcal{B}_{K}(G) \big)$, so that $\rho \big( \mathsf L (S) \big) \geq \rho \big( \mathcal{B}_{K}(G) \big)$.
In view of (\ref{eq:rho}), we obtain that $\rho \big( \mathsf L (S) \big) = \rho \big( \mathcal{B}_{K}(G) \big)$, so that $S \in \mathcal B_{\rho,K} (G)$.

Conversely, let $S\in \mathcal B_{\rho,K}(G)$.
If $S = 1_{\mathcal F (G)}$ is trivial, then $S$ is the image of the identity in $\mathcal R_{\rho}$, and so $S \in \overline{\theta} \big( \mathcal R_{\rho} \big)$.
Suppose that $S$ is non-trivial.
Then, since $\rho \big( \mathsf L (S) \big) = \rho \big( \mathcal{B}_{K}(G) \big) = \frac{\mathsf D_K(G)}{2}$, there exist two factorizations $z$ and $z'$ of $S$ with $\frac{|z|}{|z'|} = \frac{\mathsf D_K (G)}{2}$.
Since $U_1, \ldots, U_m$ are all atoms in $\mathcal B_K (G)$, we can write
\[
  z := U^{[x_1]}_1 \bdot \ldots \bdot U^{[x_m]}_m \quad \mbox{ and } \quad z' := U^{[y_1]}_1 \bdot \ldots \bdot U^{[y_m]}_m
\]
with $x_1,\ldots, x_m, y_1, \ldots, y_m \in \mathbb N_0$.
If we set $\mathbf x = (x_1, \ldots, x_m)$ and $\mathbf y = (y_1, \ldots, y_m)$, then $|z| = | \mathbf x|$, $|z'| = | \mathbf y|$, $\pi ( \mathbf x ) = \pi ( \mathbf y )$ and $2 |\mathbf x| = \mathsf D_K (G) | \mathbf y|$.
Thus, $(\mathbf x, \mathbf y) \in \mathcal R_{\rho}$ with $\overline{\theta} (\mathbf x, \mathbf y) = S$, whence $S \in \overline{\theta} \big( \mathcal R_{\rho} \big)$.
\end{proof}

\medskip
\noindent
{\bf Conflict of interest.} On behalf of all authors, the corresponding author states that there is no conflict of interest.

\medskip
\noindent
{\bf Acknowledgement.} This paper was completed during J.S. Oh's research visit to the AlgNTh group at University of Graz.
The authors are grateful to Alfred Geroldinger for his helpful comments on a preliminary version of this paper, for the excellent working conditions, and for all his hospitality.

% % % % % % % % % % % % % % % % % % % % % % % % % % % % % % % % % % % % % % % % % % % % % % % % % % % % % %
% % %             % % % % % % %               % % % % % % %               % % % % % % %               % % %
% % %             % % % % % % %               % % % % % % %               % % % % % % %               % % %
% % % % % % % % % % % % % % % % % % % % % % % % % % % % % % % % % % % % % % % % % % % % % % % % % % % % % %

\smallskip
\providecommand{\bysame}{\leavevmode\hbox to3em{\hrulefill}\thinspace}
\providecommand{\MR}{\relax\ifhmode\unskip\space\fi MR }
% \MRhref is called by the amsart/book/proc definition of \MR.
\providecommand{\MRhref}[2]{%
  \href{http://www.ams.org/mathscinet-getitem?mr=#1}{#2}
}
\providecommand{\href}[2]{#2}

\medskip

\end{document}